\documentclass[12pt,leqno]{article}

\textwidth=14cm \textheight=23cm \oddsidemargin=1.5cm
\topmargin=-0.6cm

\usepackage[cp1250]{inputenc}
\usepackage[OT4]{fontenc}
\usepackage{amsfonts}
\usepackage{amsmath}
\usepackage{amssymb}
\usepackage{amsthm}
\usepackage{fancyhdr}
\setlength{\headheight}{15pt}

\newcommand{\R}{\mathbb{R}}
\newcommand{\Z}{\mathbb{Z}}

\newcommand{\Apis}{{\cal A}}

\newcommand{\codim}{\mathop{\rm codim}\nolimits}
\newcommand{\rank}{\mathop{\rm rank}\nolimits}

\newcommand{\signature}{\mathop{\rm signature}\nolimits}
\newcommand{\sgn}{\mathop{\rm sgn}\nolimits}
\newcommand{\ar}{\longrightarrow}
\newcommand{\inv}{^{-1}}

\newtheorem{theorem}{Theorem}[section]
\newtheorem{lemma}[theorem]{Lemma}
\newtheorem{cor}[theorem]{Corollary}
\newtheorem{prop}[theorem] {Proposition}

\theoremstyle{definition}

\newtheorem{rem}[theorem]{Remark}
\newtheorem{ex}[theorem]{Example}

\title{\thanks{%
Iwona Krzy\.{z}anowska and Aleksandra~Nowel\\
University of Gda\'{n}sk,
              Institute of Mathematics \\
              80-952 Gda\'{n}sk, Wita Stwosza 57\\
              Poland\\
              Tel.: +48-58-5232059\\
              Fax: +48-58-3414914\\
              Email: Iwona.Krzyzanowska@mat.ug.edu.pl\\
              Email: Aleksandra.Nowel@mat.ug.edu.pl\\ \\
{\em Keywords:} Stiefel manifold; cross--cap; quadratic form\\              
2000 \emph{Mathematics Subject Classification} 14P25, 57R45, 12Y05}
Mappings into the~Stiefel manifold and~cross--cap singularities
}

\author{Iwona~Krzy\.{z}anowska \and Aleksandra~Nowel}

\date{July 2015}

\begin{document}

\def\nothanksmarks{\def\thanks##1{\protect\footnotetext[0]{\kern-\bibindent##1}}}

\nothanksmarks

\maketitle

\pagestyle{fancy}

\lhead{\fancyplain{}{\textsc{\small Krzy\.{z}anowska, Nowel}}}
\rhead{\fancyplain{}{\emph{\small Mappings into the~Stiefel manifold}}}

\begin{abstract}
Take $n>k>1$ such that $n-k$ is odd. In this paper we consider a mapping $a$ from $(n-k+1)$--dimensional closed ball into the space of $(n\times k)$--matrices such that its restriction to a sphere goes into the Stiefel manifold $\widetilde{V}_k(\R^n)$. We construct a homotopy invariant $\Lambda$ of $a|S^{n-k}$ which defines an isomorphism between $\pi_{n-k}\widetilde{V}_k(\R^n)$ and $\Z_2$. It can be used to calculate in an effective way the class of $a|S^{n-k}$ in $\pi_{n-k}\widetilde{V}_k(\R^n)$ for a polynomial mapping $a$ and to find the number mod $2$ of cross--cap singularities of a mapping from a closed $m$--dimensional ball into $\R^{2m-1}$, $m$ even. 
\end{abstract}

\section{Introduction}

Mappings from a sphere into the Stiefel manifold are natural objects of study. We will denote by $\widetilde{V}_k(\R^n)$ the non--compact Stiefel manifold (the set of all $k$--frames in $\R^n$). It is well--known (see \cite{hatcher}) that $\pi_{n-k}\widetilde{V}_k(\R^n)$ is isomorphic to $\Z_2$ if $n-k$ is odd and $k>1$, and to $\Z$ in all other cases.

Take $n>k>1$ such that $n-k$ is odd. In this paper we consider a mapping $a$ from $\overline{B}^{n-k+1}$ into $M_k(\R^n)$ --- the space of $(n\times k)$--matrices such that its restriction to the $(n-k)$--dimensional sphere goes into the Stiefel manifold. With $a$ we associate the mapping $\widetilde{a}\colon S^{k-1}\times \overline{B}^{n-k+1}\longrightarrow \R^n$ as 
$\widetilde{a}(\beta,x)=\beta_1a_1(x)+\ldots+\beta_ka_k(x)$ where 
$\beta=(\beta_1\ldots ,\beta_k)\in S^{k-1}$. 

Using $\widetilde{a}$ we construct a homotopy invariant $\Lambda$ of $a|S^{n-k}$ in the following way. If $\widetilde{a}\inv (0)$ is an infinite set then we can slightly perturb the map $a$ to get finite number of zeros of $\widetilde{a}$, and then
\[
\Lambda(a|S^{n-k} )=\sum _{(\beta, x)}\deg _{(\beta, x)}\widetilde{a} \mod 2, 
\]
where $(\beta, x)$ runs through half of the zeros of $\widetilde{a}$, i.e. we choose only one from each pair $(\beta, x), (-\beta, x)\in \widetilde{a} \inv (0)$.

It turns out that $\Lambda$ defines an isomorphism between $\pi_{n-k}\widetilde{V}_k(\R^n)$ and $\Z_2$ (Theorem \ref{isomorphismcg}). 

The case where $n-k$ is even was investigated in \cite{krzyzszafran}. The authors constructed an isomorphism between $\pi_{n-k}\widetilde{V}_k(\R^n)$ and $\Z$ as one--half the topological degree of $\widetilde{a}$. In the case where $n-k$ is odd this degree equals $0$ and the method from \cite{krzyzszafran} cannot be used.

Using the invariant $\Lambda$ and tools from \cite{krzyzszafran, szafraniec1}, for a polynomial mapping $a$ one can represent the class of $a|S^{n-k}$ in $\pi_{n-k}\widetilde{V}_k(\R^n)$ in terms of signatures of some quadratic forms or signs of determinants of their matrices. This provide an effective way to compute this invariant (see the algorithm and examples in Section \ref{computing}).

Moreover we present a nice characterisation of a mapping $a\colon B^{n-k+1} \longrightarrow M_k(\R^n)$ being transversal to $M_k(\R^n)\setminus \widetilde{V}_k(\R^n)$. We show that it happens if and only if zero is a regular value of $\widetilde{a}$ (Theorem \ref{lambdatransversal}). This result leads to another application of $\Lambda$. 

A mapping $f$ from an $m$--dimensional manifold $M$ into $\R^{2m-1}$ has a cross-cap at $p\in M$ if and only if locally near $p$ it has the form 
$(x_1,\ldots , x_m)\mapsto(x_1^2,x_2,\ldots , x_m,x_1x_2,\ldots ,x_1x_m)$ (see \cite[Theorem 4.6]{golub},  \cite[Lemma 2]{whitney1}).

In \cite{whitney1}, for $m$ even, Whitney proved that if $M$ is closed and $f$ has only cross--caps as singularities then the number of cross--caps is even. If  $M$ has a boundary then following \cite[Theorem 4]{whitney1}, for a homotopy $f_t\colon M\longrightarrow \R^{2m-1}$ regular in some open neighbourhood of $\partial M$, if the only singular points of $f_0$ and $f_1$ are cross--caps then the numbers of cross--caps of $f_0$ and $f_1$ are congruent mod $2$. In the case where $m$ is odd, one can associate signs with cross--caps, and to get similar results we have to count the sum of signs of cross--caps (see \cite{whitney1}). This case was investigated in \cite{krzyz}. 

We show (Corollary \ref{crosscaps}) that if $m$ is even, $f\colon \R^m\longrightarrow \R^{2m-1}$ is smooth, and for some $r>0$ there is no singular point of $f$ belonging to the sphere $S^{m-1}(r)$, then the number of cross--caps in $B^{m}(r)$ of $f$ mod $2$ can be expressed as $\Lambda(\alpha)$, where $\alpha$ is some mapping associated with $f$. In the polynomial case one can calculate the number of cross--caps of $f$ mod $2$ using algebraic methods. 

\section{Mappings into the Stiefel manifold $\widetilde{V}_k(\R^n)$ for $n-k$ odd} \label{stiefel}

By $B^n(p,r)$ we will denote the $n$--dimensional open ball centered at $p$, with radius $r$, by $\overline{B}^n(p,r)$ --- its closure, and by $S^{n-1}(p,r)$ --- the $(n-1)$--dimensional sphere. When we omit $p$ that means that the center is at the origin, if $r$ is omitted then $r=1$.

If $M$ is a smooth oriented $n$--manifold, $p\in M$, and $f\colon M\longrightarrow \R^n$ is such that $p$ is isolated in $f\inv (0)$, then there exists a compact $n$--manifold $N\subset M$ with boundary such that $f\inv (0)\cap N=\{ p\}$ and $f\inv (0)\cap \partial N=\emptyset$. Then by $\deg _p f$ we will denote the local topological degree of $f$ at $p$, i.e. the topological degree of the mapping $\partial N\ni x \mapsto f(x)/|f(x)|\in S^{n-1}$. 

If $g\colon M\longrightarrow \R^n$ is close enough to $f$, then $g\inv (0)\cap \partial N$ is also empty, and the topological degree of the mapping $\partial N\ni x \mapsto g(x)/|g(x)|\in S^{n-1}$ is equal to $\deg _p f$.

Let $n-k> 0$ be an odd number and $k>1$. Let us denote by $\widetilde{V}_k(\R^n)$ the Stiefel manifold, i.e. the set of all $k$--frames in $\R^n$. It is known (see \cite{hatcher}), that $\pi_{n-k}\widetilde{V}_k(\R^n)\simeq\Z_2$.

We write $M_k(\R^n)$ for the set of all $k$--tuples of vectors in $\R^n$ (i.e. the set of $(n\times k)$--matrices). We can consider $\widetilde{V}_k(\R^n)$ as a subset of $M_k(\R^n)$. 

Let $\alpha =(\alpha_1,\ldots ,\alpha_k):S^{n-k}\longrightarrow \widetilde{V}_k(\R^n)$ be smooth. Let us assume that there exists a smooth mapping $a=(a_1,\ldots, a_k)\colon \overline{B}^{n-k+1} \longrightarrow M_k(\R^n)$  (there $a_i(x)\in \R^n$) such that the restriction  $a|S^{n-k}$ is equal to $\alpha$.

We can define the mapping $\widetilde{a}\colon S^{k-1}\times \overline{B}^{n-k+1}\longrightarrow \R^n$ as $$\widetilde{a}(\beta,x)=\beta_1a_1(x)+\ldots+\beta_ka_k(x),$$ where $\beta=(\beta_1,\ldots ,\beta_k)\in S^{k-1}$ and $x=(x_1,\ldots ,x_{n-k+1})\in \overline{B}^{n-k+1}$. Then $\widetilde{a} \inv (0)\subset S^{k-1}\times B^{n-k+1}$ and $\widetilde{a}(\beta, x)=-\widetilde{a}(-\beta, x)$.

Let us assume that $\widetilde{a}\inv (0)$ is finite. Then we can define 
\[
\Lambda(\alpha )=\sum _{(\beta, x)}\deg _{(\beta, x)}\widetilde{a} \mod 2,
\]
where $(\beta, x)\in \widetilde{a} \inv (0)$ and we choose only one from each pair $(\beta, x), (-\beta, x)\in \widetilde{a} \inv (0)$ ($(\beta, x)$ runs through half of the zeros of $\widetilde{a}$). 

\begin{lemma}
$\Lambda$ is well defined, i.e. it does not depend on the choice of the mapping $a$ such that $a|S^{n-k}=\alpha$ and $\widetilde{a}\inv (0)$ is finite. 
\end{lemma}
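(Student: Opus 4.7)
The plan is to connect two admissible extensions by a linear homotopy and read off the difference of their $\Lambda$-values as the parity of the boundary of a compact $1$-manifold. Let $a^0,a^1\colon \overline{B}^{n-k+1}\ar M_k(\R^n)$ be two smooth extensions of $\alpha$ such that $\widetilde{a}^i\inv(0)$ is finite for $i=0,1$. Put
$$H(t,x)=(1-t)a^0(x)+ta^1(x),\qquad t\in[0,1].$$
Since both summands restrict to $\alpha$ on $S^{n-k}$, so does each $H(t,\cdot)$, and the associated map
$$\widetilde{H}\colon [0,1]\times S^{k-1}\times \overline{B}^{n-k+1}\ar \R^n,\qquad \widetilde{H}(t,\beta,x)=\sum_{i=1}^k \beta_iH_i(t,x),$$
is nowhere zero on $[0,1]\times S^{k-1}\times S^{n-k}$ (because $\alpha(x)$ is a $k$-frame) and is odd in $\beta$.

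First I would reduce to transverse zero sets. For the map $\widetilde{a}(\beta,x)=\sum_i\beta_ia_i(x)$, varying $a$ alone is a submersion onto $\R^n$ wherever $\beta\neq 0$, so the parametric transversality theorem furnishes arbitrarily small perturbations of $a^0$ and $a^1$, supported away from $S^{n-k}$, after which $0$ is a regular value of $\widetilde{a}^i$. Local degrees are preserved under small perturbation and, at a regular zero, they are $\pm 1$; hence the sum defining $\Lambda$ reduces modulo~$2$ to the count of regular zeros and is unchanged by the perturbation. A second generic perturbation of $H$ on $(0,1)\times B^{n-k+1}$, fixing the endpoints, makes $0$ a regular value of $\widetilde{H}$ as well.

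With transversality in hand, $\widetilde{H}\inv(0)$ is a compact smooth $1$-manifold contained in $[0,1]\times S^{k-1}\times B^{n-k+1}$ whose boundary is exactly $\bigl(\{0\}\times \widetilde{a}^0\inv(0)\bigr)\cup\bigl(\{1\}\times \widetilde{a}^1\inv(0)\bigr)$. The free involution $(t,\beta,x)\mapsto(t,-\beta,x)$ preserves this manifold, so the quotient is again a compact $1$-manifold, with boundary of cardinality $\tfrac12\bigl(|\widetilde{a}^0\inv(0)|+|\widetilde{a}^1\inv(0)|\bigr)$. Since the boundary of any compact $1$-manifold has even cardinality, the two half-counts agree mod~$2$, i.e.\ $\Lambda(\alpha)$ computed from $a^0$ equals $\Lambda(\alpha)$ computed from $a^1$.

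The main obstacle is the transversality step: the perturbations must preserve the extension condition $H(t,\cdot)|S^{n-k}=\alpha$ (forbidding any perturbation on the boundary sphere), must retain the constrained form $\widetilde{H}(t,\beta,x)=\sum\beta_iH_i(t,x)$ so that oddness in $\beta$ is automatic, and must be small enough neither to spoil the local-degree balance at the original zeros nor to push new zeros out to $S^{k-1}\times S^{n-k}$. Once these perturbations exist, the conclusion is a textbook application of the fact that a compact $1$-manifold has an even number of boundary points.
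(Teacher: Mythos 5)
Your argument is correct, but it follows a genuinely different route from the paper's. The paper disposes of this lemma in one line, as an immediate corollary of the homotopy--invariance Theorem \ref{lambdahomotopy} (two extensions of the same $\alpha$ restrict to homotopic, indeed equal, boundary maps); that theorem is in turn proved downstairs in the matrix space: the extensions and the connecting homotopy $H$ are made transversal to the stratified set $\Sigma_k=M_k(\R^n)\setminus\widetilde{V}_k(\R^n)$ (invoking Whitney stratifications and openness of transversality), $\Lambda$ is identified with $\#a\inv(\Sigma_k)\bmod 2$ via Theorem \ref{lambdatransversal}, and one counts boundary points of the $1$--manifold $H\inv(\Sigma_k^1)$. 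You instead work entirely upstairs with $\widetilde{H}\inv(0)\subset[0,1]\times S^{k-1}\times B^{n-k+1}$: an elementary parametric--transversality argument (perturbing the matrix entries against a bump function supported off $S^{n-k}$, which is legitimate since $\widetilde{a}\inv(0)$ avoids $S^{k-1}\times S^{n-k}$) replaces the stratification theory, and the quotient by the free involution $(t,\beta,x)\mapsto(t,-\beta,x)$ replaces the passage to $a\inv(\Sigma_k)$. That quotient step is the essential move --- it upgrades the trivially even total zero count to the statement that the \emph{half}--counts at $t=0$ and $t=1$ agree mod $2$ --- and your reduction of $\Lambda$ to the half--count of regular zeros (each local degree $\pm1\equiv1$, a degree--$d$ zero splitting into $\equiv d$ regular ones, compatibly with the $\pm\beta$ pairing) is exactly the content of the paper's Lemma \ref{bliskie}. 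What your more self--contained route buys is independence from the stratification machinery of \cite{golub,trotman,whitney3}; what it gives up is the by--product the paper gets for free, namely the identification of $\Lambda$ with the parity of $\#a\inv(\Sigma_k)$, which is needed later for the cross--cap application. Note also that your argument as written treats only two extensions of the same $\alpha$, but it extends verbatim to full homotopy invariance by replacing the linear homotopy's boundary values with an arbitrary homotopy $h$ into $\widetilde{V}_k(\R^n)$.
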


This will be a simple consequence of Theorem \ref{lambdahomotopy} which is stated below.

Using properties of the topological degree and the fact that $\widetilde{a}(\beta, x)=-\widetilde{a}(-\beta, x)$ it is easy to show the following.

\begin{lemma} \label{bliskie}
Let mappings $a,b \colon \overline {B}^{n-k+1}\longrightarrow M_k(\R^n)$ be smooth such that $a|S^{n-k}, b|S^{n-k}\colon S^{n-k}\longrightarrow \widetilde{V}_k(\R^n)$ and $\widetilde{a}\inv (0)$, $\widetilde{b}\inv (0)$ are finite. If $a$ and $b$ are close enough to each other, then
\[
\Lambda (a|S^{n-k})=\Lambda (b|S^{n-k}) \mod 2.
\]
\end{lemma}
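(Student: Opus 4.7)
The plan is to isolate the zeros of $\widetilde a$, use stability of local topological degree under small perturbation, and exploit the antipodal symmetry $\widetilde a(-\beta,x)=-\widetilde a(\beta,x)$ (inherited by $\widetilde b$) to match the representatives chosen in $\Lambda(a|S^{n-k})$ with the ones chosen in $\Lambda(b|S^{n-k})$.

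First I would enumerate the finitely many zeros of $\widetilde a$ as antipodal pairs $(\beta_j,x_j),(-\beta_j,x_j)$ with $j=1,\dots,N$. Since $\widetilde a^{-1}(0)\subset S^{k-1}\times B^{n-k+1}$ sits in the open interior, I can pick pairwise disjoint open neighborhoods $U_j$ of $(\beta_j,x_j)$ inside $S^{k-1}\times B^{n-k+1}$ such that the $U_j$ together with their images $U_j'=\sigma(U_j)$ under the antipodal map $\sigma(\beta,x)=(-\beta,x)$ are all pairwise disjoint, and so that $\widetilde a^{-1}(0)\cap\overline{U_j}=\{(\beta_j,x_j)\}$, $\widetilde a^{-1}(0)\cap\overline{U_j'}=\{(-\beta_j,x_j)\}$. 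By compactness of the complement of $\bigcup(U_j\cup U_j')$ in $S^{k-1}\times\overline B^{n-k+1}$ (and the fact that zeros of $\widetilde a$ do not reach the boundary), there is $\delta>0$ with $|\widetilde a|\ge\delta$ on this complement.

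Now if $b$ is sufficiently close to $a$ in $C^0$, then $\widetilde b$ is close to $\widetilde a$ uniformly on $S^{k-1}\times\overline B^{n-k+1}$, so $\widetilde b$ does not vanish outside $\bigcup(U_j\cup U_j')$, and $\widetilde b$ does not vanish on $\partial U_j$ or $\partial U_j'$. By the standard stability property of the topological degree (recalled in Section 2 of the paper), the sum of local degrees of $\widetilde b$ at its zeros inside $U_j$ equals $\deg_{(\beta_j,x_j)}\widetilde a$, and analogously for $U_j'$. In particular $\widetilde b^{-1}(0)$ is finite, so $\Lambda(b|S^{n-k})$ is defined.

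The final step is to match halves. Because $\widetilde b$ inherits $\widetilde b(-\beta,x)=-\widetilde b(\beta,x)$, the map $\sigma$ sends zeros of $\widetilde b$ in $U_j$ bijectively to zeros in $U_j'$, so choosing \emph{all} the zeros of $\widetilde b$ lying in $U_1\cup\dots\cup U_N$ picks exactly one representative from every antipodal pair in $\widetilde b^{-1}(0)$. Summing the local degrees gives
\[
\Lambda(b|S^{n-k})\equiv\sum_{j=1}^{N}\sum_{(\beta,x)\in\widetilde b^{-1}(0)\cap U_j}\deg_{(\beta,x)}\widetilde b\equiv\sum_{j=1}^{N}\deg_{(\beta_j,x_j)}\widetilde a\equiv\Lambda(a|S^{n-k})\pmod 2.
\]
The only mildly delicate point, and the one I would flag as the main thing to verify carefully, is that the antipodal-pair structure is preserved under perturbation: this is what allows a single choice of representatives to work for both $\widetilde a$ and $\widetilde b$ simultaneously, and it is exactly where the symmetry $\widetilde b\circ\sigma=-\widetilde b$ is used. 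Everything else is a direct application of the excision and stability properties of the local topological degree.
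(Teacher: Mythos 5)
Your argument is correct and is essentially the one the paper intends: the lemma is stated there without proof, with only the remark that it follows from properties of the topological degree together with the antisymmetry $\widetilde{a}(-\beta,x)=-\widetilde{a}(\beta,x)$, and these (degree stability/excision plus the antipodal pairing of zeros and of the isolating neighbourhoods) are exactly the ingredients you deploy. One small quibble: the finiteness of $\widetilde{b}^{-1}(0)$ is a hypothesis of the lemma, not a consequence of $C^0$-closeness to $a$ (closeness only forces the zeros of $\widetilde{b}$ into $\bigcup_j(U_j\cup U_j')$), so your parenthetical ``in particular $\widetilde{b}^{-1}(0)$ is finite'' should be dropped rather than derived.
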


Put $\Sigma _k=\Sigma _k(\R^n)=M_k(\R^n)\setminus \widetilde{V}_k(\R^n)$. The set $\Sigma _k$ is algebraic and closed. There exists a natural stratification $(\Sigma _k^i)_{i=1,\ldots ,k}$ of $\Sigma _k$, where $\Sigma _k^i$ is a set of $(n\times k)$--matrices of rank $k-i$. According to \cite[Proposition II.5.3]{golub} $\Sigma _k^i$ is a smooth submanifold of $M_k(\R^n)$ of codimension $(n-k+i)i$. By \cite[Theorem II.4.9, Corollary II.4.12]{golub} in the set of smooth mappings $\overline{B}^{n-k+1}\longrightarrow M_k(\R^n)$ the subset of mappings transversal to $\Sigma_k$ (i.e. transversal to all $\Sigma _k^i$) is dense.

Since $\codim \Sigma _k^1=n-k+1$, for $a\colon \overline{B}^{n-k+1}\longrightarrow M_k(\R^n)$ transversal to $\Sigma _k$ ($a \pitchfork \Sigma _k$ for short), we obtain that $a\inv (\Sigma_k)=a\inv (\Sigma_k^1)$ is a finite set (see \cite[Proposition II.4.2, Theorem II.4.4]{golub}), moreover $\widetilde{a}\inv (0)$ is also finite.

Note that if $a\colon \overline{B}^{n-k+1}\longrightarrow M_k(\R^n)$ is such that $a(S^{n-k})\subset \widetilde{V}_k(\R^n)$, then according to \cite[Corollary II.4.12]{golub} we can find a smooth mapping transversal to $\Sigma _k$ which is arbitrarily close to $a$ and its restriction to $S^{n-k}$ equals $a|S^{n-k}$.

In Section \ref{proofs} we shall prove the following facts:

\begin{theorem}\label{lambdahomotopy}
Let $n-k$ be odd, $k>1$. Let $\alpha, \beta \colon S^{n-k}\longrightarrow \widetilde{V}_k(\R^n)$ be smooth mappings and assume that there exist smooth 
$a,b\colon \overline{B}^{n-k+1}\longrightarrow M_k(\R^n)$ such that $a|S^{n-k}=\alpha$, $b|S^{n-k}=\beta$, and $\widetilde{a},\widetilde{b}$ have a finite number of zeros. If $\alpha$ and $\beta$ are homotopic (i.e. $[\alpha ]=[\beta]$ in $\pi_{n-k}\widetilde{V}_k(\R^n)$), then
\[
\Lambda(\alpha)=\Lambda(\beta).
\]
\end{theorem}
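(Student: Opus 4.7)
The plan is to build a $1$--dimensional $\Z_2$--cobordism between $\widetilde{a}\inv(0)$ and $\widetilde{b}\inv(0)$ on the cylinder $\overline{B}^{n-k+1}\times[0,1]$, and then use the fact that a compact $1$--manifold has an even number of boundary points to equate the mod--$2$ counts of half--zeros.

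Let $H\colon S^{n-k}\times[0,1]\to\widetilde{V}_k(\R^n)$ realise the homotopy $[\alpha]=[\beta]$. Prescribing $a$ on $\overline{B}^{n-k+1}\times\{0\}$, $b$ on $\overline{B}^{n-k+1}\times\{1\}$, and $H$ on $S^{n-k}\times[0,1]$ defines a continuous map on the boundary of the cylinder, which is a topological $(n-k+1)$--sphere. Since $M_k(\R^n)$ is a vector space, this extends to a smooth
\[
F\colon\overline{B}^{n-k+1}\times[0,1]\ar M_k(\R^n).
\]
The associated map $\widetilde{F}(\beta,x,t)=\sum_{i=1}^k\beta_iF_i(x,t)$ from $S^{k-1}\times\overline{B}^{n-k+1}\times[0,1]$ to $\R^n$ is nowhere zero on the lateral face $S^{k-1}\times S^{n-k}\times[0,1]$, because there $F(\cdot,t)\in\widetilde{V}_k(\R^n)$.

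Next I would perturb $F$ slightly to a close $F'$ so that $0$ is a regular value of $\widetilde{F}'$ on the whole cylinder (standard transversality; the lateral face is automatically in order). This changes $a,b$ to close perturbations $a'=F'(\cdot,0)$, $b'=F'(\cdot,1)$, for which $\widetilde{a}'$ and $\widetilde{b}'$ have $0$ as a regular value; by Lemma~\ref{bliskie},
\[
\Lambda(a|S^{n-k})\equiv\Lambda(a'|S^{n-k})\pmod 2,\qquad \Lambda(b|S^{n-k})\equiv\Lambda(b'|S^{n-k})\pmod 2.
\]
Now $C:=(\widetilde{F}')\inv(0)$ is a compact smooth $1$--manifold with boundary $(\widetilde{a}')\inv(0)\sqcup(\widetilde{b}')\inv(0)$, and the free involution $(\beta,x,t)\mapsto(-\beta,x,t)$ preserves $C$ since $\widetilde{F}'(-\beta,x,t)=-\widetilde{F}'(\beta,x,t)$. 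Therefore $C/\Z_2$ is again a compact smooth $1$--manifold, whose boundary has even cardinality. Under regularity every local degree equals $\pm 1$, hence
\[
\Lambda(a'|S^{n-k})+\Lambda(b'|S^{n-k})\equiv\tfrac12|(\widetilde{a}')\inv(0)|+\tfrac12|(\widetilde{b}')\inv(0)|\equiv 0\pmod 2,
\]
and combining with the previous congruences gives $\Lambda(\alpha)=\Lambda(\beta)$.

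The delicate point is that the global transversality perturbation of $F$ inevitably disturbs the boundary data $\alpha,\beta$, so one cannot simply replace $a,b$ by transversal extensions of the given $\alpha,\beta$; Lemma~\ref{bliskie} is precisely the tool that reconciles $\Lambda$ for the perturbed and original extensions, without any circular appeal to the well--definedness of $\Lambda$ (which has not yet been established at this stage). Everything else — existence of the smooth extension $F$, generic transversality, and the $\Z_2$--quotient of the $1$--manifold $C$ — is standard.
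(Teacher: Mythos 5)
Your argument is correct, and it reaches the conclusion by a genuinely different mechanism than the paper, even though both share the cobordism--over--the--cylinder skeleton. The paper takes the $1$--manifold to be $H\inv (\Sigma _k)\subset \overline{B}^{n-k+1}\times [0;1]$: to make this work it invokes a Whitney (a) refinement of the stratification of $\Sigma _k$ and Trotman's openness theorem (so that the restrictions $H(\cdot,0)$, $H(\cdot,1)$, being merely close to $a$, $b$, are still transversal to $\Sigma _k$), and then translates $\# (H(\cdot,i))\inv (\Sigma _k) \bmod 2$ into $\Lambda$ via Theorem \ref{lambdatransversal}, whose proof is itself a nontrivial local computation. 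You instead take the $1$--manifold to be $C=(\widetilde{F}')\inv (0)\subset S^{k-1}\times \overline{B}^{n-k+1}\times [0;1]$ and exploit the free antipodal involution in the $\beta$--variable: passing to $C/\Z_2$ turns ``the boundary of a compact $1$--manifold has evenly many points'' into exactly the statement that the numbers of antipodal pairs on the two ends agree mod $2$, i.e. $\Lambda (a'|S^{n-k})+\Lambda (b'|S^{n-k})\equiv 0$. This bypasses the stratification machinery and Theorem \ref{lambdatransversal} entirely; all you need is the elementary transversality/extension theorem for maps into $\R^n$ --- applied so that $0$ is a regular value of both $\widetilde{F}'$ and its restrictions to the faces $t=0,1$, a point worth stating explicitly since regularity on the interior alone would not identify $\partial C$ with $(\widetilde{a}')\inv (0)\sqcup (\widetilde{b}')\inv (0)$ --- together with the observation that $C$ avoids the lateral face and the corners because $H$ lands in $\widetilde{V}_k(\R^n)$. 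Your appeal to Lemma \ref{bliskie} to reconcile the perturbed and original extensions mirrors the paper's and is not circular. What the paper's longer route buys is the identity $\Lambda (a|S^{n-k})=\# a\inv (\Sigma _k) \bmod 2$, which it needs anyway for the cross--cap application; your route is the more economical and self--contained proof of the homotopy invariance itself.
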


\begin{theorem} \label{lambdatransversal}
Let  $a\colon \overline{B}^{n-k+1}\longrightarrow M_k(\R^n)$ be smooth and such that $a(S^{n-k})\subset \widetilde{V}_k(\R^n)$. Then the mapping $a$ is transversal to $\Sigma _k$ if and only if the origin is a regular value of $\widetilde{a}$. If this is the case and $n-k$ is odd, $k>1$,  then
\[
\Lambda (a|S^{n-k})=\# a\inv (\Sigma _k) \mod 2 .
\]
\end{theorem}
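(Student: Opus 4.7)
The proof splits naturally into two pieces: first, the equivalence $a \pitchfork \Sigma_k \Leftrightarrow 0 \in \R^n$ is a regular value of $\widetilde{a}$; second, the mod-$2$ count formula, which follows quickly from the first. For the equivalence, the key ingredient is an explicit description of $T_A \Sigma_k^1$ at a rank $k-1$ matrix $A$. Setting $K = \ker A = \langle \beta_0 \rangle$ and $V = \mathrm{Im}\, A$, the first-order expansion $(A+tB)(\beta_0+t\gamma) = t(B\beta_0 + A\gamma) + O(t^2)$ yields
\[
T_A \Sigma_k^1 = \{B \in M_k(\R^n) : B\beta_0 \in V\},
\]
so that $M_k(\R^n)/T_A\Sigma_k^1 \cong \R^n/V$ via $B \mapsto B\beta_0 \bmod V$. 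Transversality $a \pitchfork \Sigma_k^1$ at $x$ is therefore equivalent to the linear map $\R^{n-k+1} \to \R^n/V$, $w \mapsto (Da(x)w)\beta_0 \bmod V$, being surjective---equivalently, bijective by equal dimensions.

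Next, for $(\beta, x) \in \widetilde{a}\inv(0)$ one computes
\[
D\widetilde{a}(\beta, x)(v, w) = a(x)v + (Da(x)w)\beta, \quad v \in T_\beta S^{k-1} = \beta^\perp, \ w \in \R^{n-k+1}.
\]
If $a(x) \in \Sigma_k^1$, then $\ker a(x) = \langle \beta \rangle$ forces $a(x)(\beta^\perp) = V$, and reducing mod $V$ shows that $D\widetilde{a}(\beta, x)$ is surjective iff $a \pitchfork \Sigma_k^1$ at $x$. Deeper strata are ruled out symmetrically: for $i \geq 2$, $\codim \Sigma_k^i = (n-k+i)i > n-k+1$, so $a \pitchfork \Sigma_k^i$ forces $a\inv(\Sigma_k^i) = \emptyset$; dually, if $\dim \ker a(x) = i \geq 2$ then
\[
\dim \mathrm{Im}\, D\widetilde{a}(\beta, x) \leq (k-i) + (n-k+1) = n-i+1 < n,
\]
contradicting regularity of $0$. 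This gives the equivalence in full.

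For the count formula, under these equivalent hypotheses each $x \in a\inv(\Sigma_k) = a\inv(\Sigma_k^1)$ contributes exactly the two points $(\pm \beta_x, x)$ to $\widetilde{a}\inv(0)$, where $\beta_x$ is a unit vector spanning the one-dimensional kernel of $a(x)$. The definition of $\Lambda$ selects one representative from each such antipodal pair, and since $0$ is a regular value, $\deg_{(\beta_x, x)} \widetilde{a} = \pm 1$; reducing mod $2$ collapses the signs and yields $\Lambda(a|S^{n-k}) \equiv \#a\inv(\Sigma_k) \pmod 2$. The main obstacle is really the tangent-space identification in the first paragraph together with the quotient-space matching of the two surjectivity criteria through $\R^n/V$; once that is carried out, the remainder is linear algebra, a dimension count, and counting antipodal pairs.
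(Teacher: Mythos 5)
Your proof is correct, and it arrives at the same pivotal reduction as the paper --- at a point $\bar x\in a\inv(\Sigma_k^1)$ both transversality and regularity of $\widetilde a$ collapse to bijectivity of one and the same linear map between $(n-k+1)$--dimensional spaces --- but by a genuinely different route. The paper first normalizes $a(\bar x)$ by row and column operations (Lemma \ref{transformations}, which needs its own argument that such operations preserve the zero set of $\widetilde a$ and the regularity of its zeros), then reads off $T_{a(\bar x)}\Sigma_k^1$ from the explicit local equations of $\Sigma_k^1$ in Golubitsky--Guillemin and compares two concrete matrix ranks, both reducing to condition (\ref{maxrank}) on $\partial(a_1^1,\ldots,a_1^{n-k+1})/\partial(x_1,\ldots,x_{n-k+1})$. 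You instead work invariantly: the identification $T_A\Sigma_k^1=\{B\colon B\beta_0\in \mathrm{Im}\,A\}$ (correct, via your first-order expansion together with the codimension count $\dim\R^n/\mathrm{Im}\,A=n-k+1$) lets you phrase both conditions as surjectivity of $w\mapsto (Da(x)w)\beta_0$ onto $\R^n/\mathrm{Im}\,a(x)$, dispensing with the normal-form lemma entirely. Your treatment of the deeper strata also differs slightly and is equally valid: the paper infers $a\inv(\Sigma_k)=a\inv(\Sigma_k^1)$ from finiteness of $\widetilde a\inv(0)$ (a corank $\geq 2$ point would contribute a positive-dimensional sphere of zeros), whereas you rule them out by the rank bound $\dim \mathrm{Im}\, D\widetilde a(\beta,x)\leq n-i+1<n$. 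The final step --- each $x\in a\inv(\Sigma_k^1)$ yields exactly one antipodal pair $(\pm\beta_x,x)$, each zero has local degree $\pm1$, and mod $2$ the signs disappear --- matches the paper's implicit argument. What your approach buys is independence from the normalization machinery; what the paper's buys is a single explicit rank criterion reused verbatim in both halves of the equivalence and directly checkable in coordinates.
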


\begin{rem}
It is worth to underline that the above equivalence is true for arbitrary $n-k>0$. 
\end{rem}

Up to now we have defined $\Lambda (\alpha)$ for smooth $\alpha \colon S^{n-k}\longrightarrow \widetilde{V}_k(\R^n)$ with smooth $a\colon \overline{B}^{n-k+1}\longrightarrow M_k(\R^n)$ such that
$\alpha=a|S^{n-k}$, having useful properties stated above. 

By the Tietze Extension Theorem for any continuous mapping $\alpha \colon S^{n-k}\longrightarrow \widetilde{V}(\R^n)$ there exists a continuous mapping $f \colon \overline{B}^{n-k+1}\longrightarrow M_k(\R^n)$ such that $f|S^{n-k}=\alpha$. Then by \cite[Lemma 1.5]{hirsch} $f$ is homotopic to some smooth mapping from $\overline{B}^{n-k+1}$ to $M_k(\R^n)$, and so it is homotopic to a smooth $a \colon \overline{B}^{n-k+1}\longrightarrow M_k(\R^n)$ such that $a(S^{n-k})\subset \widetilde{V}_k(\R^n)$ and $\widetilde{a}\inv (0)$ is finite (such an $a$ can be chosen from the set of mappings transversal to $\Sigma_k$). 

Thus we can extend our definition of $\Lambda$ to continuous mapping $\alpha \colon S^{n-k}\longrightarrow \widetilde{V}_k(\R^n)$ in the following way:
\[
\Lambda(\alpha )=\sum _{(\beta, x)}\deg _{(\beta, x)}\widetilde{a} \mod 2,
\]
where $(\beta, x)\in \widetilde{a} \inv (0)$ and we choose only one from each pair $(\beta, x), (-\beta, x)\in \widetilde{a} \inv (0)$.

As a consequence of Theorem \ref{lambdahomotopy} in a natural way we obtain:

\begin{theorem}\label{lambdahomotopycg}
Let $\alpha, \beta \colon S^{n-k}\longrightarrow \widetilde{V}_k(\R^n)$ be continuous. If $\alpha$ and $\beta$ are homotopic (i.e. $[\alpha ]=[\beta]$ in $\pi_{n-k}\widetilde{V}_k(\R^n)$), then
\[
\Lambda(\alpha)=\Lambda(\beta).
\]
\end{theorem}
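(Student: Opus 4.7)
The plan is to reduce the statement to the smooth Theorem \ref{lambdahomotopy} already established. Given continuous homotopic maps $\alpha, \beta \colon S^{n-k} \longrightarrow \widetilde{V}_k(\R^n)$, I would follow the construction recalled in the paragraph preceding the theorem: apply the Tietze Extension Theorem to obtain continuous extensions, then use \cite[Lemma 1.5]{hirsch} to produce nearby smooth mappings $g_\alpha, g_\beta \colon \overline{B}^{n-k+1} \longrightarrow M_k(\R^n)$. Provided the smoothing is taken close enough in $C^0$-norm, the restrictions $g_\alpha|S^{n-k}$ and $g_\beta|S^{n-k}$ remain inside the open set $\widetilde{V}_k(\R^n)$ and are homotopic there to $\alpha$ and $\beta$ respectively. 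By \cite[Corollary II.4.12]{golub} I would then perturb $g_\alpha, g_\beta$ in the interior to obtain smooth $a, b \colon \overline{B}^{n-k+1} \longrightarrow M_k(\R^n)$ transversal to $\Sigma_k$ with unchanged restrictions to $S^{n-k}$; transversality guarantees that $\widetilde{a}\inv(0)$ and $\widetilde{b}\inv(0)$ are finite.

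By the extended definition of $\Lambda$, the quantities $\Lambda(\alpha)$ and $\Lambda(a|S^{n-k})$ coincide (and similarly for $\beta$ and $b$), where the right-hand side is read in the smooth sense. Since homotopy in $\widetilde{V}_k(\R^n)$ is transitive and $[\alpha]=[\beta]$ by hypothesis, the smooth boundary maps satisfy $[a|S^{n-k}]=[b|S^{n-k}]$ in $\pi_{n-k}\widetilde{V}_k(\R^n)$. Theorem \ref{lambdahomotopy} then yields $\Lambda(a|S^{n-k})=\Lambda(b|S^{n-k})$, and chaining the identifications produces the desired equality $\Lambda(\alpha)=\Lambda(\beta)$.

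The only point that requires care is the well-definedness of $\Lambda$ on continuous maps, which is implicit in the identification above. This is settled by exactly the same reasoning: any two admissible smooth approximations $a, a'$ of $\alpha$ produce smooth boundary maps that are both homotopic to $\alpha$ within $\widetilde{V}_k(\R^n)$, hence homotopic to each other, so Theorem \ref{lambdahomotopy} forces their mod-$2$ degree sums to agree. I do not anticipate any genuine obstacle beyond this bookkeeping; the statement will be essentially a formal consequence of the smooth approximation and transversality machinery already invoked in the excerpt.
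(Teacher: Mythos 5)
Your proposal is correct and follows essentially the same route the paper intends: the paper derives Theorem \ref{lambdahomotopycg} ``in a natural way'' from Theorem \ref{lambdahomotopy} via exactly the Tietze--extension, smoothing, and transversality construction described in the paragraph defining $\Lambda$ for continuous maps, with well-definedness handled by the same homotopy argument you give. No gaps.
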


\begin{ex}\label{elneutralny}
Let $v_1,\ldots ,v_k$ be an orthonormal system of vectors in $\R^n$. For $a \colon \overline{B}^{n-k+1}\longrightarrow M_k(\R^n)$ given by $a(x)=(v_1,\ldots ,v_k)$ we obtain $a(\overline{B}^{n-k+1})\subset \widetilde{V}_k(\R^n)$. It is obvious that
\begin{itemize}
\item[(i)] $a\inv (\Sigma _k)=\emptyset$, hence $a \pitchfork \Sigma _k$, 
\item[(ii)] $\widetilde{a}\inv (0)=\emptyset$.
\end{itemize}
Each of the two above facts implies $\Lambda(a|S^{n-k})=0$.
\end{ex}

\begin{ex}
Let us define a mapping $a=(a_1,\ldots , a_k) : \overline{B}^{n-k+1}\longrightarrow M_k(\R^n)$ by $a_i(x)=(0,\ldots , 1, 0, \ldots ,0)$ where $1$ is on the $i$--th place, for $i=1,\ldots ,k-1$, and $a_k=(0,\ldots ,0,x_1,\ldots , x_{n-k+1})$. Then $a\inv (\Sigma _k)=\{ (0,\ldots, 0)\}$ and so $\alpha :=a|S^{n-k}\colon S^{n-k}\longrightarrow \widetilde{V}_k(\R^n)$. We have 
\[\widetilde{a}(\beta, x)=(\beta_1,\ldots , \beta_{k-1},\beta_kx_1,\ldots, \beta_kx_{n-k+1})\in \R^{n}\]
for $\beta\in S^{k-1}$, $x\in \overline{B}^{n-k+1}$.

Note that $\widetilde{a}\inv (0)=\{(0,\ldots ,0,1;0,\ldots ,0), (0,\ldots ,0,-1;0,\ldots ,0)\}$. 
Thus
\[
\Lambda(\alpha )= \deg _{(0,\ldots ,0,1;0,\ldots ,0)}\widetilde{a} \mod 2.
\]
It is easy to see that $(0,\ldots ,0,\pm 1;0,\ldots ,0)$ are regular points of $\widetilde{a}$, so $a \pitchfork \Sigma _k$ and $\deg _{(0,\ldots ,0,1;0,\ldots ,0)}\widetilde{a}=\pm 1$. Each of the two facts implies $\Lambda(\alpha)= 1 \mod 2$.
\end{ex}

Using the above Examples and Theorems \ref{lambdahomotopy} and \ref{lambdatransversal} one can easily show the following.

\begin{theorem} \label{isomorphismcg}
The mapping $\pi_{n-k}\widetilde{V}_k(\R^n)\ni \sigma \mapsto \Lambda (\alpha)\in \Z _2$, where $\alpha\colon S^{n-k}\longrightarrow \widetilde{V}_k(\R^n)$ is such that $[\alpha ]=\sigma$, is an isomorphism.
\end{theorem}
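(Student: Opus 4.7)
The plan is to reduce this statement to the earlier theorems and the two worked examples. First, by Theorem \ref{lambdahomotopycg}, the value $\Lambda(\alpha)$ depends only on the homotopy class $[\alpha]$, so the assignment $\sigma\mapsto\Lambda(\alpha)$ (where $[\alpha]=\sigma$) is a well-defined function from $\pi_{n-k}\widetilde{V}_k(\R^n)$ to $\Z_2$, independent of the chosen representative.

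Next I would verify that this function sends the identity element of $\pi_{n-k}\widetilde{V}_k(\R^n)$ to $0$. Since $k<n$, the Stiefel manifold $\widetilde{V}_k(\R^n)$ is path connected, so the trivial class is represented by any constant map into an orthonormal frame; Example \ref{elneutralny} shows such a representative has $\Lambda=0$. On the other hand, the following example produces a specific $\alpha\colon S^{n-k}\longrightarrow\widetilde{V}_k(\R^n)$ (extended by the given $a$ to the ball) with $\Lambda(\alpha)=1$. Therefore the assignment $\sigma\mapsto\Lambda(\alpha)$ is surjective onto $\Z_2$.

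Finally, the classical identification $\pi_{n-k}\widetilde{V}_k(\R^n)\simeq\Z_2$ from \cite{hatcher} tells us that both source and target have exactly two elements, so a surjection between them is automatically a bijection. A bijection between two groups of order $2$ that carries the identity to the identity is the unique isomorphism $\Z_2\to\Z_2$, and this concludes the argument. There is no real obstacle in this last step; all the content has been absorbed into Theorems \ref{lambdahomotopy}, \ref{lambdatransversal} and \ref{lambdahomotopycg}, together with the two explicit calculations. The only thing one should be slightly careful about is noting that connectedness of $\widetilde{V}_k(\R^n)$ (valid because $k<n$) is what allows one to identify the class of Example \ref{elneutralny} with the neutral element of $\pi_{n-k}\widetilde{V}_k(\R^n)$.
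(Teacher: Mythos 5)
Your proposal is correct and follows essentially the same route the paper intends: the paper itself only remarks that the theorem follows ``using the above Examples and Theorems \ref{lambdahomotopy} and \ref{lambdatransversal},'' and your argument (well-definedness from homotopy invariance, $\Lambda=0$ on the constant frame of Example \ref{elneutralny}, $\Lambda=1$ on the second example, and the observation that a surjection between two-element groups fixing the identity is the isomorphism) is exactly that reasoning spelled out. The remark about connectedness of $\widetilde{V}_k(\R^n)$ for $k<n$ identifying the constant map with the neutral element is a worthwhile detail the paper leaves implicit.
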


It is obvious that in all the results of this Section, instead of $\overline{B}^{n-k+1}$ we can use $\overline{B}^{n-k+1}(r)$, for any $r>0$.

\section{Proofs} \label{proofs}

\begin{lemma}
Let $a\colon \overline{B}^{n-k+1}\longrightarrow M_k(\R^n)$ be a smooth mapping such that $a(S^{n-k})\subset \widetilde{V}_k(\R^n)$. For any $x\in a\inv (\Sigma _k)$ there exists such a diffeomorphism $\Phi \colon M_k(\R^n) \longrightarrow M_k(\R^n)$ that $\Phi (a(x))$ has a form 
\[
\left [
\begin{array}{c|c}
0 &  \cr
\vdots & \mathbf{0}_{(n-k+1)\times (k-1)} \cr
0 &  \cr
\hline
0 &  \cr
\vdots & * \cr
0 &  \cr
\end{array}
\right ],
\]
moreover $\Phi (\Sigma _k^r)=  \Sigma _k^r$ for $r=1,\ldots ,k$, and so $a \pitchfork \Sigma _k$ at $x$ if and only if $(\Phi \circ a) \pitchfork \Sigma _k$ at $x$. Here $\mathbf{0}$ denotes the appropriate zero matrix.
\end{lemma}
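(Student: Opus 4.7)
The plan is to construct $\Phi$ as a linear automorphism of $M_k(\R^n)$ of the form $\Phi(X)=PXQ$ for suitably chosen $P\in GL_n(\R)$ and $Q\in GL_k(\R)$. Such a map is automatically a diffeomorphism of $M_k(\R^n)$, and it preserves the rank of any matrix, so $\Phi(\Sigma_k^r)=\Sigma_k^r$ for every $r=1,\ldots,k$. The standard fact that transversality to a submanifold is preserved by diffeomorphisms then gives the last assertion of the lemma.

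To choose $P$ and $Q$, set $A=a(x)$ and note that $A\in\Sigma_k$ means $\rank A\le k-1$, so the columns $A_1,\ldots,A_k$ of $A$ are linearly dependent. First I would pick $Q\in GL_k(\R)$ implementing column operations so that $AQ$ has its first column equal to zero; concretely, write some nontrivial linear relation among the $A_i$ and use it to define $Q$ as an elementary column-operation matrix that sends one of the columns to $0$ and keeps the rest unchanged (possibly after a permutation of columns). After this, $AQ$ is an $n\times k$ matrix whose first column vanishes and whose remaining $k-1$ columns span a subspace $V\subset\R^n$ of dimension at most $k-1$.

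Next I would choose $P\in GL_n(\R)$ so that $P(V)\subset\mathrm{span}(e_{n-k+2},\ldots,e_n)$, which is possible since $\dim V\le k-1$: just extend a basis of $V$ to a basis of $\R^n$ and send the basis of $V$ to $e_{n-k+2},\ldots,e_n$ (completing the rest arbitrarily). Then $PAQ$ has first column zero and its other $k-1$ columns have vanishing entries in rows $1,\ldots,n-k+1$, which is exactly the prescribed block form with the $*$ block of size $(k-1)\times(k-1)$ at the bottom right.

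Finally I would verify that $\Phi(X)=PXQ$ preserves the stratification: since $\rank(PXQ)=\rank X$ for all $X$, we have $\Phi(\Sigma_k^r)=\Sigma_k^r$ for every $r$. Because $\Phi$ is a linear diffeomorphism that preserves the submanifold $\Sigma_k^r$, its differential at $a(x)$ carries $T_{a(x)}\Sigma_k^r$ isomorphically onto $T_{\Phi(a(x))}\Sigma_k^r$, and so $a\pitchfork\Sigma_k^r$ at $x$ if and only if $\Phi\circ a\pitchfork\Sigma_k^r$ at $x$; taking this for each $r$ gives the final equivalence. There is no real obstacle here — the argument is pure linear algebra together with the tautology that rank-preserving linear isomorphisms preserve the rank stratification; the only point demanding a little care is the explicit construction of $Q$ from a dependence relation among the columns.
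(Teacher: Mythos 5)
Your proof is correct and follows essentially the same route as the paper, which gives no explicit proof but remarks only that $\Phi$ can be taken as a composition of elementary row and column operations --- exactly your $X\mapsto PXQ$ with $P\in GL_n(\R)$, $Q\in GL_k(\R)$, preserving rank and hence each stratum $\Sigma_k^r$. Your write-up just fills in the routine linear-algebra details (zeroing a column via a dependence relation, then pushing the span of the remaining columns into the last $k-1$ coordinates), all of which are sound.
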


We can always choose such a $\Phi$ to be a composition of elementary row and columns operations. For this $\Phi$ we have:

\begin{lemma} \label{transformations}
There exists a diffeomorphism $\Psi \colon S^{k-1} \longrightarrow S^{k-1}$ such that 
\[
\widetilde{a}(\beta, x)=0 \ \Leftrightarrow \ \widetilde{\Phi(a)}(\Psi (\beta), x)=0,
\]
and if $(\beta, x)\in \widetilde{a}\inv (0)$,  then $(\beta, x)$ is a regular point of $\widetilde{a}$ if and only if $(\Psi (\beta),x)$ is a regular point of $\widetilde{\Phi (a)}$. 
\end{lemma}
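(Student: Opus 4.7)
The plan is to exploit the fact that $\Phi$, being a composition of elementary row and column operations, is linear on $M_k(\R^n)$. Concretely, row operations act as left multiplication by an invertible $n\times n$ matrix $L$ and column operations as right multiplication by an invertible $k\times k$ matrix $R$, so one can write $\Phi(A) = LAR$ for some fixed $L$ and $R$. With the column-vector convention $\widetilde{a}(\beta,x) = a(x)\beta$, this yields
\[
\widetilde{\Phi(a)}(\gamma, x) = L\,a(x)\,R\,\gamma.
\]

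The natural candidate diffeomorphism is
\[
\Psi(\beta) = \frac{R\inv\beta}{|R\inv\beta|},
\]
which is smooth on $S^{k-1}$ with smooth inverse $\gamma\mapsto R\gamma/|R\gamma|$. A direct calculation gives $R\,\Psi(\beta) = \beta/|R\inv\beta|$, so
\[
\widetilde{\Phi(a)}(\Psi(\beta), x) = \frac{1}{|R\inv\beta|}\,L\,\widetilde{a}(\beta, x).
\]
Since $L$ is invertible and the scalar factor is strictly positive, this proves $\widetilde{a}(\beta,x)=0$ if and only if $\widetilde{\Phi(a)}(\Psi(\beta), x)=0$.

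For the regularity statement, set $\Gamma(\beta,x) = (\Psi(\beta),x)$, a diffeomorphism of $S^{k-1}\times \overline{B}^{n-k+1}$, and write $\widetilde{\Phi(a)}\circ\Gamma = \mu\cdot(L\circ\widetilde{a})$, where $\mu(\beta,x)=1/|R\inv\beta|$ is a positive scalar function. At a zero of $\widetilde{a}$ the product rule kills the term involving $d\mu$, so
\[
D(\widetilde{\Phi(a)}\circ\Gamma)(\beta, x) = \mu(\beta, x)\,L\circ D\widetilde{a}(\beta, x).
\]
Because $L$ is an isomorphism and $\mu(\beta,x)>0$, the right-hand side is surjective exactly when $D\widetilde{a}(\beta,x)$ is. Since $D\Gamma(\beta,x)$ is an isomorphism on the domain, the left-hand side is surjective exactly when $D\widetilde{\Phi(a)}(\Psi(\beta),x)$ is, and the claimed equivalence of regularity follows.

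I do not foresee a substantive obstacle: the whole argument is bookkeeping around the normalising scalar $|R\inv\beta|\inv$ produced when $R\inv\beta$ is projected back onto $S^{k-1}$. The only point that requires a line of care is the vanishing of the $d\mu$ term in the product rule, and this is automatic because regularity is only being compared at points of $\widetilde{a}\inv(0)$.
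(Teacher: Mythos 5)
Your proof is correct, and it streamlines the paper's argument in two ways worth noting. The paper reduces to a single elementary row or column operation at a time and, for each case, establishes the scalar identity $\widetilde{\Phi(a)}(\Psi(\beta),x)=g(\beta)\,\widetilde{a}(\beta,x)$ (its equation for a column scaling is exactly your identity with $L=I$ and $R$ diagonal, and its diffeomorphism $\gamma$ of $\R^n$ for row operations is your $L$); you instead observe from the start that any such composition is $A\mapsto LAR$ with $L\in GL_n(\R)$, $R\in GL_k(\R)$, which handles all cases in one computation. More substantively, for the regular-point equivalence the paper performs an explicit rank computation on the augmented Jacobian matrix of $\widetilde{a}$ restricted to $S^{k-1}\times\overline{B}^{n-k+1}$ (citing \cite[Section 5]{kanosza}) and checks by hand that the rank is preserved, whereas you get the same conclusion abstractly from the chain rule applied to $\widetilde{\Phi(a)}\circ\Gamma=\mu\cdot(L\circ\widetilde{a})$, using that $d\mu$ is irrelevant on $\widetilde{a}\inv(0)$ and that pre-/post-composition with isomorphisms preserves surjectivity of the differential. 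Your version is shorter, avoids the case analysis and the external rank formula, and makes transparent why the equivalence holds; the paper's explicit matrix computation has the minor advantage of exhibiting the concrete form of $d\widetilde{a}$, which it reuses in the proof of Theorem \ref{lambdatransversal}. The only points needing the care you already gave them are that $\mu>0$ everywhere on $S^{k-1}$ (so $\Psi$ is a well-defined diffeomorphism with inverse $\gamma\mapsto R\gamma/|R\gamma|$) and that the product-rule term involving $d\mu$ vanishes precisely because the comparison is made only at zeros of $\widetilde{a}$.
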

\begin{proof}
It is obvious that $\Phi$ can be taken as a composition of simple column and row transformations. Hence it suffices to show the conclusion of the Lemma for any simple column or row transformation.

Let us assume that $\Phi$ is a simple column--multiplying transformation. For example let $\Phi$ multiply the first column by $c\neq 0$, so that $\Phi (a)=(ca_1,a_2,\ldots ,a_k)$.

Then we can define $\Psi \colon S^{k-1} \longrightarrow S^{k-1}$ as
\[
\Psi (\beta)=\Psi (\beta _1,\ldots ,\beta _k)={\frac{(\frac{1}{c}\beta _1, \beta _2,\ldots ,\beta _k)}{\|(\frac{1}{c}\beta _1, \beta _2,\ldots ,\beta _k)\|}_{eucl}}.
\]
Hence
\begin{equation} \label{ga}
\widetilde{\Phi(a)}(\Psi(\beta),x)=\frac{|c|}{\sqrt{\beta _1^2+c^2(1-\beta _1^2)}}\widetilde{a}(\beta,x)=:g(\beta)\widetilde{a}(\beta,x). 
\end{equation}
Let us observe that for $\beta \in S^{k-1}$, we have $\sqrt{\beta _1^2+c^2(1-\beta _1^2)}\neq 0$, of course $g(\beta)\neq 0$. So $\widetilde{a}(\beta, x)=0$ if and only if $\widetilde{\Phi(a)}(\Psi (\beta), x)=0$. 

Take $(\beta,x)\in \widetilde{a}\inv (0)$. Then by (\ref{ga}) we have $\dfrac{\partial \widetilde{\Phi (a)}}{\partial x_i}(\Psi (\beta), x)=g(\beta )\dfrac{\partial \widetilde{a}}{\partial x_i}(\beta, x)$. Then (see \cite[Section 5]{kanosza})
\[
\rank d\widetilde{\Phi (a)} (\Psi(\beta),x)= 
\]
\begin{equation} \label{macierz}
=1+\rank \left [
\begin{matrix}
\frac{g(\beta)}{c}\beta _1 & g(\beta)\beta _2 & \ldots & g(\beta)\beta _k & 0 & \ldots & 0\cr
 & & & & & & \cr
ca_1(x) & a_2(x) & \ldots & a_k(x) & g(\beta) \frac{\partial \widetilde{a}}{\partial x_1}(\beta,x) & \ldots & g(\beta) \frac{\partial \widetilde{a}}{\partial x_{n-k+1}}(\beta,x)\cr 
& & & & & & 
\end{matrix}
\right ].
\end{equation}
Since $\beta _1 a_1(x)+\ldots +\beta _ka_k(x)=0$, an easy computation shows that the number (\ref{macierz}) is equal to
\[
1+\rank \left [
\begin{matrix}
\beta _1 & \ldots & \beta _k & 0 & \ldots & 0\cr
 & & & & & & \cr
a_1(x) & \ldots & a_k(x) & \frac{\partial \widetilde{a}}{\partial x_1}(\beta,x) & \ldots & \frac{\partial \widetilde{a}}{\partial x_{n-k+1}}(\beta,x)\cr 
& & & & & & 
\end{matrix}
\right ]=
\rank d\widetilde{a} (\beta, x).
\]
Thus $(\beta, x)$ is a regular point of $\widetilde{a}$ if and only if $(\Psi (\beta),x)$ is a regular point of $\widetilde{\Phi (a)}$.

If $\Phi$ is any other simple column transformation the proof is similar.

If $\Phi$ is a simple row transformation, then there exists a diffeomorphism $\gamma \colon \R^n \longrightarrow \R^n$ such that $\gamma (0)=0$ and $\widetilde{a}(\beta, x) = \gamma \circ \widetilde{\Phi (a)}(\beta, x)$, so the conclusion in this case is also true. 
\end{proof}

\begin{proof}[Proof of Theorem \ref{lambdatransversal}]
Let us note that if $a\pitchfork \Sigma_k$, then $a\inv (\Sigma_k)=a\inv (\Sigma_k^1)$ is a finite set, and so $\widetilde{a}\inv(0)$ is finite.

On the other hand if the origin is a regular value of $\widetilde{a}$, then $\widetilde{a}\inv(0)$ is finite, so $a\inv (\Sigma_k)$ is also finite and  $a\inv (\Sigma_k)=a\inv (\Sigma_k^1)$.

Let us take $\bar{x}\in a\inv (\Sigma_k^1)$. According to Lemma \ref{transformations} we can assume that $a(\bar{x})$ has a form 
\[
a_1(\bar{x})=(0,\ldots,0) \mbox{ and } a_i(\bar{x})=(0,\ldots,0,a_i^{n-k+2}(\bar{x}),\ldots,a_i^{n}(\bar{x}))
\]
for $i=2,\ldots,k$ (here $a_i^j$ is the element standing in the $j$--th row and $i$--th column). Then 
\[
\rank[a_i^j(\bar{x})]=k-1,
\]
where $j=n-k+2,\ldots ,n$, $i=2,\ldots ,k$, and $\widetilde{a}(\bar{\beta},\bar{x})=0$ if and only if $\bar{\beta}=(\pm 1,0,\ldots ,0)$.

For a matrix $F=[f_i^j]_{k\times n}$ we will denote by $\hat{F}$ the submatrix $[f_i^j]$, where $j=n-k+2,\ldots ,n$, $i=2,\ldots ,k$. If $F$ is close enough to $a(\bar{x})$ in $M_k(\R^n)$ then $\det \hat{F}\neq 0$. In this case it is easy to check that $F\in \Sigma _k^1$ if and only if 
\[
f^j_1=[f^j_2\ \ldots \ f^j_k] \hat{F}\inv [f^{n-k+2}_1\ \ldots \ f^n_1]^T
\]    
for $j=1,\ldots ,n-k+1$ (see \cite[Lemma II.5.2]{golub}).

Hence the tangent space $T_{a(\bar{x})}\Sigma_k^1$ is spanned by vectors $v_i=(0,\ldots ,0,\ldots , 1, \ldots ,0)$, where $1$ stands at $(i+n-k+1)$--th place, $i=1,\ldots ,nk-(n-k+1)$.

Let us observe that $a\pitchfork \Sigma _k^1$ at $\bar{x}$ if and only if $\rank da(\bar{x})$ is maximal (i.e. equals $n-k+1$) and $T_{a(\bar{x})}\Sigma_k^1\cap da(\bar{x})\R^{n-k+1}=\{0\}$. It is equivalent to the condition:
\begin{equation} \label{maxrank}
\rank \frac{\partial (a_1^1,\ldots ,a_1^{n-k+1})}{\partial (x_1,\ldots ,x_{n-k+1})}(\bar{x})=n-k+1. 
\end{equation}

On the other hand $(\bar{\beta},\bar{x})$ is a regular point of $\widetilde{a}$ if and only if $\rank d\widetilde{a}(\bar{\beta},\bar{x})=n$. It is equivalent to (see \cite[Section 5]{kanosza})
\[
n+1 =\rank \left [
\begin{matrix}
2\beta _1 & \ldots & 2\beta _k & 0 & \ldots & 0 \cr
a_1^1(x) & \ldots & a_k^1(x) &  & & \cr
\vdots & & \vdots & & \dfrac{\partial \widetilde{a}}{\partial (x_1,\ldots ,x_{n-k+1})}(\beta,x) & \cr
a_1^n(x) & \ldots & a_k^n(x) &  & & \cr
\end{matrix}
\right ]_{(\bar{\beta},\bar{x})}
=
\]
\[
=\rank \left [
\begin{array}{cccc|ccc}
\pm 2 & 0 & \ldots & 0 & 0 & \ldots & 0 \cr
 & & \mathbf{0} & & & \dfrac{\partial (a_1^1,\ldots ,a_1^{n-k+1})}{\partial (x_1,\ldots ,x_{n-k+1})}(\bar{x}) & \cr
\hline 
0 & a_2^{n-k+2}(\bar{x}) & \ldots & a_k^{n-k+2}(\bar{x}) &  & & \cr
\vdots & \vdots & & \vdots & & * & \cr
0 & a_2^{n}(\bar{x}) & \ldots & a_k^{n}(\bar{x}) &  & & 
\end{array}
\right ].
\]
This holds if and only if (\ref{maxrank}) holds.
\end{proof}

\begin{proof}[Proof of Theorem \ref{lambdahomotopy}]
According to \cite[Theorem 19.2]{whitney3} we can find a finite refinement $(Q^r)$ of the stratification $(\Sigma _k^i)$ fulfilling Whitney (a) condition. Then by \cite[Theorem, p.~274]{trotman}, in the set of smooth mappings $\overline{B}^{n-k+1}\longrightarrow M_k(\R^n)$ the subset of mappings transversal to all $Q^r$ is not only dense, but also open.  

The codimension of the biggest strata of $(Q^r)$ equals $n-k+1$, so if $a$ is transversal to all $Q^r$ then it is transversal to all $\Sigma _k^i$.

According to Lemma \ref{bliskie} we can assume that $a$ and $b$ are transversal to all $Q^r$. 

Let $h\colon S^{n-k}\times [0;1]\longrightarrow \widetilde{V}_k(\R^n)$ be the homotopy between $\alpha$ and $\beta$. The mappings $a$, $b$ and $h$ define a continuous mapping from $\left (S^{n-k}\times [0;1]\right )\cup \left (\overline{B}^{n-k+1}\times \{ 0,1\}\right )$ into $M_k(\R^n)$. By the Tietze Extension Theorem it can be extend to $\overline{B}^{n-k+1}\times [0;1]$. So close enough to this mapping we can find a smooth $H\colon \overline{B}^{n-k+1}\times [0;1]\longrightarrow M_k(\R^n)$ which is transversal to $\Sigma _k$ (see \cite{golub, hirsch}). Since $\codim \Sigma _k^i> n-k+2 =\dim \left (\overline{B}^{n-k+1}\times [0;1]\right )$ for $i>1$, we have $H\pitchfork \Sigma _k^1$ and $H\inv (\Sigma _k^i)=\emptyset$ for $i>1$.

The mapping $H$ has the following properties:
\begin{itemize}
\item $H(\cdot,0)$, $H(\cdot,1)$ are close enough to $a$, $b$ (resp.); 
\item $H|(S^{n-k}\times [0;1])$ is close enough to $h$, so that it goes into $\widetilde{V}_k(\R^n)$ and $H|(S^{n-k}\times [0;1])\pitchfork \Sigma _k$.
\end{itemize}

Since the set of smooth mappings $\overline{B}^{n-k+1}\longrightarrow M_k(\R^n)$ transversal to all $Q^r$ is open, $H(\cdot,0)$, $H(\cdot,1)$ are transversal to all $Q^r$, and so to $\Sigma _k$.

We have $H\inv (\Sigma _k)=H\inv (\Sigma _k^1)$ and by \cite[Theorem, p.~60]{gupol} it is a compact $1$--dimensional manifold whose intersection with $\left (S^{n-k}\times [0;1]\right ) \cup \left ( \overline{B}^{n-k+1}\times \{0,1\}\right )$ is exactly its boundary. Because $H|(S^{n-k}\times [0;1])$ goes into $\widetilde{V}_k(\R^n)$, we have 
\[
\partial H\inv (\Sigma _k)=H\inv (\Sigma _k)\cap \left (\overline{B}^{n-k+1}\times \{0,1\}\right )=\]
\[
=\left (H(\cdot,0)\inv (\Sigma _k)\times \{0\}\right )\cup \left (H(\cdot,1)\inv (\Sigma _k)\times \{1\}\right ). 
\]
Since the boundary of a compact $1$--dimensional manifold has an even number of points, 
\[
0= \# \partial H\inv (\Sigma _k)=\# (H(\cdot,0))\inv (\Sigma _k)+ \# (H(\cdot,1))\inv (\Sigma _k) \mod 2.
\]

According to Theorem \ref{lambdatransversal} and Lemma \ref{bliskie} we have 
\[
\Lambda(\alpha )+\Lambda (\beta)=\# (H(\cdot,0))\inv (\Sigma _k)+ \# (H(\cdot,1))\inv (\Sigma _k)=0 \mod 2, 
\]
hence
\[
\Lambda(\alpha )=\Lambda (\beta) \mod 2. 
\]
\end{proof}

\section{Counting $\Lambda$ for polynomial mappings into the Stiefel manifold} \label{computing}

In \cite{krzyzszafran} the authors defined an isomorphism from $\pi_{n-k}\widetilde{V}_k(\R^n)$ to $\Z$ for $n-k$ even. They also presented a method to calculate values of this isomorphism for polynomial mappings. It is easy to observe that some of methods presented in \cite[Section 3]{krzyzszafran} can be used to obtain an effective way of computing $\Lambda$ in the polynomial case for $n-k$ odd.

Let $a=(a_1,\ldots ,a_k):\R^{n-k+1}\longrightarrow M_k(\R^n)$ 
be a polynomial mapping, $n-k$ odd, $k>1$. Denote by $\left [a_{i}^j(x)\right ]$ the $(n\times k)$--matrix given by $a(x)$ (here $a_i^j$ is the element standing in the $j$--th row and $i$--th column). Then
$$\widetilde{a}(\beta, x)=
\beta_1a_1(x)+\ldots +\beta_ka_k(x)=\left [a_{i}^j(x)\right ]\left [\begin{array}{c}
\beta_1\\
\vdots\\
\beta_k
\end{array} \right ]:S^{k-1}\times \R^{n-k+1}\longrightarrow \R^n.$$

By $I$ we will denote the ideal in $\R[x_1,\ldots ,x_{n-k+1}]$ generated by all 
$k\times k$ minors of  $\left [a_{i}^j(x)\right ]$. Let $V(I)=\{x\in\R^{n-k+1}\ |\  h(x)=0\mbox{ for all }h\in I\}$.

As in \cite[Lemma 3.1]{krzyzszafran}
we conclude that $p\in V(I)$ if and only if  $a_1(p), \ldots ,a_k(p)$ are linearly dependent, 
i.e. if  $\widetilde{a}(\beta, p)=0$ for some $\beta$.

Put 
$$m(x)=\det\left [\begin{array}{ccc}
a_{2}^1(x)&\ldots &a_{k}^1(x)\\ \\
a_{2}^{k-1}(x)&\ldots &a_{k}^{k-1}(x) 
\end{array} \right ],$$
and $\Apis= \R[x_1,\ldots ,x_{n-k+1}]/I$. Let us assume that $\dim \Apis <\infty$, so that $V(I)$ is finite. 

Assume that $\rank \left [a_{i}^j(x)\right ]\geqslant k-1$ for $x$ in $\R^{n-k+1}$. 
Then one can choose such coordinates in $\R ^{n-k+1}$ that $m\neq 0$ in $V(I)$. So $\widetilde{a}\inv (0)$ is finite and if $\widetilde{a}(\beta,x)=0$ then $\beta _1\neq 0$.

Let us define 
\[
F(\lambda,x)=(F_1,\ldots , F_n)(\lambda,x)=\left [a_{ij}(x)\right ]\left [\begin{array}{c}
1\\ \lambda_2\\
\vdots\\
\lambda_k
\end{array} \right ]:\R^{k-1}\times \R^{n-k+1}\longrightarrow\R^n
\]
where $\lambda=(\lambda _2,\ldots ,\lambda _k)$.

Take $(\beta,x)\in \widetilde{a}\inv (0)$, $\beta _1>0$, and $\lambda=(\beta _2/\beta _1,\ldots ,\beta _k/\beta _1)$.   According to \cite[Lemma 3.5]{krzyzszafran}, $(\lambda,x)$ is an isolated zero of $F$, moreover $\deg _{(\beta,x)}\widetilde{a}=\deg_{(\lambda,x)}F$. So we get:

\begin{prop} If $\alpha=a|S^{n-k}(r)\colon S^{n-k}(r)\longrightarrow \widetilde{V}_k(\R^n)$ then
\[
\Lambda(\alpha )=\sum _{(\lambda, x)}\deg _{(\lambda, x)}F \mod 2,
\]
where $x\in B^{n-k+1}(r)$ and $(\lambda, x)\in F\inv (0)$.
\end{prop}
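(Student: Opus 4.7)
The plan is to reduce the sum in the definition of $\Lambda(\alpha)$ over half the zeros of $\widetilde{a}$ to the sum over all zeros of $F$ by means of the explicit bijection $\beta\mapsto \lambda$ supplied by Lemma 3.5 of \cite{krzyzszafran}. The definition of $\Lambda$ given in Section \ref{stiefel} works on the closed ball $\overline{B}^{n-k+1}$, but the remark at the end of Section \ref{stiefel} allows us to replace it by $\overline{B}^{n-k+1}(r)$ for any $r>0$, which is exactly what is needed here.

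First I would verify that the setup puts us genuinely in the situation where $\Lambda(\alpha)$ may be computed from $\widetilde{a}$. Since $\dim\Apis<\infty$, the set $V(I)$ is finite, and the assumption $\rank[a_i^j(x)]\geq k-1$ everywhere, combined with the choice of coordinates that makes $m\neq 0$ on $V(I)$, is already used in the paragraph preceding the Proposition to conclude that $\widetilde{a}^{-1}(0)$ is finite and that $\beta_1\neq 0$ whenever $\widetilde{a}(\beta,x)=0$. Hence the set
\[
Z_+=\{(\beta,x)\in \widetilde{a}^{-1}(0)\mid \beta_1>0\}
\]
is finite and contains exactly one representative from each pair $(\beta,x),(-\beta,x)\in\widetilde{a}^{-1}(0)$. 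In particular
\[
\Lambda(\alpha)=\sum_{(\beta,x)\in Z_+}\deg_{(\beta,x)}\widetilde{a}\mod 2.
\]

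Next I would invoke the map $\Psi\colon Z_+\to F^{-1}(0)$ sending $(\beta,x)$ to $(\lambda,x)$ with $\lambda_i=\beta_i/\beta_1$ for $i=2,\ldots,k$. The inverse sends $(\lambda,x)$ to the pair $(\beta,x)$ with $\beta=(1,\lambda_2,\ldots,\lambda_k)/\|(1,\lambda_2,\ldots,\lambda_k)\|$; since $F(\lambda,x)=\beta_1^{-1}\widetilde{a}(\beta,x)$, the equation $F(\lambda,x)=0$ is equivalent to $\widetilde{a}(\beta,x)=0$ with $\beta_1>0$, so $\Psi$ is a bijection. By \cite[Lemma 3.5]{krzyzszafran} each $(\lambda,x)\in F^{-1}(0)$ is isolated in $F^{-1}(0)$ and
\[
\deg_{(\beta,x)}\widetilde{a}=\deg_{(\lambda,x)}F.
\]

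Putting these two identifications together yields
\[
\Lambda(\alpha)=\sum_{(\beta,x)\in Z_+}\deg_{(\beta,x)}\widetilde{a}
=\sum_{(\lambda,x)\in F^{-1}(0)}\deg_{(\lambda,x)}F\mod 2,
\]
which is the claim. The main obstacle, such as it is, is simply to make the bookkeeping between $\widetilde{a}$ and $F$ precise so that one sees the bijection $\Psi$ does pick out exactly one zero from each antipodal pair; once Lemma 3.5 of \cite{krzyzszafran} is available, the equality of local degrees is automatic and no further computation is required.
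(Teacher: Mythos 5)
Your argument is exactly the paper's: the authors also note that every zero of $\widetilde{a}$ has $\beta_1\neq 0$ (after the coordinate choice making $m\neq 0$ on $V(I)$), pick the representative with $\beta_1>0$ from each antipodal pair, and pass to $F$ via $\lambda=(\beta_2/\beta_1,\ldots,\beta_k/\beta_1)$ using Lemma 3.5 of the cited work for the equality of local degrees. The proposal is correct and just makes the bijection bookkeeping more explicit than the paper's one-line derivation.
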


For $h\in \Apis$ we will denote by $T(h)$ the trace of the linear endomorphism
\[
\Apis \ni a\mapsto h\cdot a\in \Apis.  
\]
Let $\Theta _{\delta}\colon \Apis \longrightarrow \R$ be a quadratic form given by $\Theta _{\delta}(a)=T(\delta \cdot a^2)$. 

Taking a polynomial map $\delta\colon \R ^n \longrightarrow \R$ as in \cite[Section 3]{krzyzszafran} and using \cite[Lemma 3.4]{krzyzszafran}, and the same arguments as in the proof of \cite[Theorem 3.3, Lemma 3.9]{krzyzszafran}, one can show the following.

\begin{prop}
Let $n-k$ be odd, $k>1$, $r>0$, $\omega(x)=r^2-x_1^2-\ldots -x_{n-k+1}^2$.
If $a=(a_1,\ldots ,a_k):\R^{n-k+1}\longrightarrow M_k(\R^n)$ 
is a polynomial mapping such that $\dim\Apis<\infty$, 
$I+\langle m \rangle=\R[x_1,\ldots ,x_{n-k+1}]$, and quadratic forms 
$\Theta_\delta,\, \Theta_{\omega\cdot\delta}:\Apis\longrightarrow\R$ are non--degenerate, then 
\[
\alpha=a|S^{n-k}(r)\colon S^{n-k}(r)\longrightarrow \widetilde{V}_k(\R^n),
\] 
and 
\[\Lambda(\alpha)=
\frac{1}{2}(\signature\Theta_{\delta}+\signature\Theta_{\omega \cdot\delta})\mod 2.
\]
\end{prop}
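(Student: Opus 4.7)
The plan is to reduce the statement to the preceding Proposition by invoking Eisenbud--Levine/Khimshiashvili-type signature formulas, following the pattern of \cite[Section 3]{krzyzszafran}.

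First I would verify that $\alpha(S^{n-k}(r))\subset\widetilde{V}_k(\R^n)$. The hypothesis $I+\langle m\rangle=\R[x_1,\ldots,x_{n-k+1}]$ yields $V(I)\cap V(m)=\emptyset$, so $m$ is nowhere zero on $V(I)$; this forces $\rank[a_i^j]=k-1$ on $V(I)$ and $\rank[a_i^j]\ge k-1$ everywhere, putting us in the setting of the preceding Proposition. Moreover, if $\omega(x_0)=0$ at some real $x_0\in V(I)$, then in the local Artinian direct summand of $\Apis$ at $x_0$ the element $\omega$ sits in the maximal ideal, hence is nilpotent and a zero divisor in $\Apis$, which would make $\Theta_{\omega\delta}$ degenerate, contradicting the hypothesis. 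Hence $V(I)\cap S^{n-k}(r)=\emptyset$ and $\alpha$ indeed lands in the Stiefel manifold.

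By the preceding Proposition,
\[
\Lambda(\alpha)=\sum_{(\lambda,x)\in F\inv(0),\,x\in B^{n-k+1}(r)}\deg_{(\lambda,x)}F \mod 2.
\]
For each real $x\in V(I)$ the equation $F(\lambda,x)=0$ has a unique solution $\lambda(x)=(\beta_2/\beta_1,\ldots,\beta_k/\beta_1)$, with $\beta_1\ne 0$ forced by $m(x)\ne 0$. Transcribing the trace-form calculations of \cite[Lemma 3.4, Theorem 3.3, Lemma 3.9]{krzyzszafran} together with the local-degree identification of \cite[Lemma 3.5]{krzyzszafran}, the choice of $\delta$ made in \cite[Section 3]{krzyzszafran} gives
\[
\signature\Theta_\delta=\sum_{x\in V(I)}\deg_{(\lambda(x),x)}F,
\]
and its weighted variant
\[
\signature\Theta_{\omega\delta}=\sum_{x\in V(I)}\sgn(\omega(x))\deg_{(\lambda(x),x)}F.
\]
Since $\omega$ is nowhere zero on $V(I)$, we have $1+\sgn(\omega(x))=2$ for $x\in B^{n-k+1}(r)$ and $0$ otherwise, so adding the two identities gives
\[
\signature\Theta_\delta+\signature\Theta_{\omega\delta}=2\sum_{x\in V(I)\cap B^{n-k+1}(r)}\deg_{(\lambda(x),x)}F.
\]
Dividing by $2$ and reducing mod $2$ produces the formula claimed.

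The main obstacle is checking that the trace-form signature formulas of \cite{krzyzszafran}, originally developed for the even $n-k$ case to compute a $\Z$-valued invariant, carry over to the odd case word-for-word. Their substantive content is a local-algebraic statement about the Artinian ring $\Apis_x$ and the chosen $\delta$, with no parity of $n-k$ appearing in the derivation; parity enters only in how the resulting sum of local degrees is then packaged into $\Z$ or $\Z_2$. I would therefore audit those proofs to confirm that no parity assumption has silently slipped in, after which the $\tfrac12$-normalisation performed above completes the argument.
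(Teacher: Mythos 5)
Your argument is correct and follows essentially the same route as the paper, which proves this Proposition only by citing the choice of $\delta$ and the signature formulas of \cite[Lemma 3.4, Theorem 3.3, Lemma 3.9]{krzyzszafran} together with the reduction to $F$ via \cite[Lemma 3.5]{krzyzszafran}; your reconstruction of those steps (the two signature identities, the $1+\sgn\omega$ bookkeeping, and the socle argument showing non-degeneracy of $\Theta_{\omega\delta}$ forces $V(I)\cap S^{n-k}(r)=\emptyset$) is exactly the intended content, and your parity concern is resolved as you suspect, since parity enters only in how the sum of local degrees is packaged.
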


By \cite[Lemma 2.2, Theorem 2.3]{szafraniec1} we obtain two facts:

\begin{cor} Under the above assumptions
\[
\Lambda(\alpha)=\dim \Apis +1+\frac{1}{2}(\sgn \det [\Theta_{\delta}]+\sgn \det[\Theta_{\omega \cdot\delta}])\mod 2.
\]
\end{cor}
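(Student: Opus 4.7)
The plan is to read this Corollary as a purely algebraic rewriting of the preceding Proposition. The Proposition already provides
\[
\Lambda(\alpha)\equiv \tfrac{1}{2}(\signature\Theta_{\delta}+\signature\Theta_{\omega\cdot\delta})\mod 2,
\]
so the only task remaining is to express the half--sum of signatures modulo $2$ in terms of the signs of the determinants of the Gram matrices. This is precisely the content of \cite[Lemma 2.2, Theorem 2.3]{szafraniec1}; no new topology is needed.

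First I would recall the elementary congruence
\[
\signature\Theta\equiv\dim\Apis+1-\sgn\det[\Theta]\mod 4
\]
valid for every non--degenerate symmetric bilinear form $\Theta$ on $\Apis$. This reduces to a two--line observation: diagonalising $\Theta$ gives $\signature\Theta=\dim\Apis-2q$ and $\sgn\det[\Theta]=(-1)^q$, where $q$ counts the negative squares; both sides of the congruence then equal $\dim\Apis\mod 4$ when $q$ is even and $\dim\Apis+2\mod 4$ when $q$ is odd.

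Next I would apply this congruence to $\Theta_{\delta}$ and $\Theta_{\omega\cdot\delta}$, both non--degenerate by hypothesis, add the two mod--$4$ statements, and divide by $2$. The halving is legitimate because each signature has the parity of $\dim\Apis$, so the sum of the two signatures is even, while each $\sgn\det[\Theta_i]\in\{\pm 1\}$, so their sum is also even. The result is
\[
\tfrac{1}{2}(\signature\Theta_{\delta}+\signature\Theta_{\omega\cdot\delta})\equiv\dim\Apis+1-\tfrac{1}{2}(\sgn\det[\Theta_{\delta}]+\sgn\det[\Theta_{\omega\cdot\delta}])\mod 2.
\]
Since $-x\equiv x\mod 2$ for any integer $x$ (and the half--sum of the two signs lies in $\{-1,0,1\}$), the minus sign on the right may be replaced by a plus. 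Substituting into the Proposition yields the claimed formula for $\Lambda(\alpha)$.

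The only real obstacle is careful parity bookkeeping, namely verifying that each division by $2$ is well defined and that sign flips are harmless modulo $2$; once those are tracked, the proof is a line of arithmetic applied to the Szafraniec congruence.
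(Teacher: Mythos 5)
Your proposal is correct and follows essentially the same route as the paper, which derives this Corollary directly from the preceding Proposition via \cite[Lemma 2.2, Theorem 2.3]{szafraniec1}; your explicit verification of the congruence $\signature\Theta\equiv\dim\Apis+1-\sgn\det[\Theta]\ (\mathrm{mod}\ 4)$ and the parity bookkeeping for the division by $2$ simply spell out what the paper leaves to the citation.
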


\begin{cor}
Let $\varphi \colon \Apis \ar \R$ be a linear functional, $\Phi$ and $\Psi$ be the bilinear symmetric forms on $\Apis$ given by $\Phi(f,g)=\varphi(fg)$ and $\Psi(f,g)=\varphi(\omega fg)$. If $\det [\Psi]\neq 0$, then  $\det [\Phi]\neq 0$ and
\[
\Lambda(\alpha)=\dim \Apis +1+\frac{1}{2}(\sgn \det [\Phi]+\sgn \det[\Psi])\mod 2.
\]
\end{cor}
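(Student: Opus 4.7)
The plan is to derive this Corollary from the preceding one by replacing the trace forms $\Theta_\delta,\Theta_{\omega\cdot\delta}$ by the more general forms $\Phi,\Psi$ built from an arbitrary linear functional $\varphi$. The key observation is that all four forms live on the same finite--dimensional commutative $\R$--algebra $\Apis$ and have the same shape: each is of the type $(f,g)\mapsto \psi(fg)$ for a suitable linear functional $\psi\colon \Apis\ar\R$. Indeed, $\Phi$ corresponds to $\psi=\varphi$, the trace form $\Theta_\delta$ corresponds to $\psi(h)=T(\delta h)$, $\Psi$ to $\psi(h)=\varphi(\omega h)$, and $\Theta_{\omega\cdot\delta}$ to $\psi(h)=T(\omega\delta h)$.

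The main tool is \cite[Lemma 2.2, Theorem 2.3]{szafraniec1}, exactly as invoked in the proof of the preceding Corollary. For any non--degenerate bilinear form $Q$ of the above shape, those results supply the congruence $\signature Q\equiv \dim\Apis+1-\sgn\det[Q]\pmod 4$, and moreover identify $\signature Q$ as an invariant of $\Apis$ that does not depend on the choice of $\psi$ beyond its non--degeneracy (and a compatible orientation on the socle). Consequently $\signature\Phi=\signature\Theta_\delta$ and $\signature\Psi=\signature\Theta_{\omega\cdot\delta}$. Combining these identifications with the preceding Proposition, which states $\Lambda(\alpha)=\frac{1}{2}(\signature\Theta_\delta+\signature\Theta_{\omega\cdot\delta})\bmod 2$, and substituting the above congruence with $Q=\Phi$ and $Q=\Psi$, yields the claimed formula via the same modular arithmetic that underlies the preceding Corollary.

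The auxiliary implication $\det[\Psi]\neq 0\Rightarrow \det[\Phi]\neq 0$ is elementary: if $\Phi$ were degenerate, some non--zero $f\in\Apis$ would satisfy $\varphi(fg)=0$ for every $g\in\Apis$, and then $\Psi(f,g')=\varphi\bigl(f(\omega g')\bigr)=0$ for every $g'$, contradicting non--degeneracy of $\Psi$. The main obstacle I foresee is the first step: pinning down the precise statement extracted from \cite{szafraniec1} that allows one to identify $\signature\Phi$ with $\signature\Theta_\delta$. Since \cite[Lemma 2.2, Theorem 2.3]{szafraniec1} is already the engine of the preceding Corollary, this should amount to a careful rereading rather than to genuinely new work.
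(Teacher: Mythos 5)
The paper offers no argument here beyond the citation of \cite[Lemma 2.2, Theorem 2.3]{szafraniec1}, so the comparison is really between your reconstruction and what those results can deliver. Your treatment of the auxiliary implication ($\det[\Psi]\neq 0\Rightarrow\det[\Phi]\neq 0$) is correct, and the mod~4 congruence $\signature Q\equiv\dim\Apis+1-\sgn\det[Q]$ for a non--degenerate $Q$ is elementary linear algebra (if $Q$ has $q$ negative squares on a $d$--dimensional space, then $\signature Q=d-2q$ and $\sgn\det[Q]=(-1)^q$). The genuine gap is the central claim that $\signature\Phi=\signature\Theta_\delta$ and $\signature\Psi=\signature\Theta_{\omega\cdot\delta}$. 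This is false for an arbitrary linear functional $\varphi$ with $\det[\Psi]\neq 0$: already for $\Apis\cong\R$ (a single simple real zero) every functional is $\varphi(h)=c\,h(p)$, the form $\Phi$ has signature $\sgn c$, and choosing $c$ of the opposite sign to $\delta(p)$ gives $\signature\Phi=-\signature\Theta_\delta$. The corollary's hypothesis imposes no positivity or socle--orientation condition on $\varphi$, so the parenthetical escape clause in your argument is not available; as written, the step on which everything else rests does not hold.

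What \emph{is} invariant under changing the functional is not each signature but the combination that actually enters the formula. Since $\Psi(f,g)=\Phi(\omega f,g)$, one has $[\Psi]=[m_\omega]^{T}[\Phi]$ where $m_\omega$ is multiplication by $\omega$ on $\Apis$, hence $\det[\Psi]=\det(m_\omega)\cdot\det[\Phi]$; this gives the implication $\det[\Psi]\neq 0\Rightarrow\det[\Phi]\neq 0$ for free and shows that $\sgn\det[\Phi]\cdot\sgn\det[\Psi]=\sgn\det(m_\omega)$ is independent of $\varphi$. The identical relation holds for the pair $\Theta_\delta$, $\Theta_{\omega\cdot\delta}$, so $\sgn\det[\Phi]\cdot\sgn\det[\Psi]=\sgn\det[\Theta_\delta]\cdot\sgn\det[\Theta_{\omega\cdot\delta}]$. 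Finally, for $s,t\in\{\pm1\}$ the quantity $\frac{1}{2}(s+t)\bmod 2$ depends only on the product $st$, so the displayed formula follows from the preceding Corollary. Replacing your signature--equality step by this determinant identity repairs the proof.
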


\begin{ex}
Let $a,b\colon \overline{B}^{2}\longrightarrow M_2(\R^3)$ be given by

\[a(x,y)=\left (
\left [
\begin{matrix}
5x^2y+2y^2+3x+2\\
5xy^2+2x^2+5x+3\\
2x^3+4xy+2y+1
\end{matrix}
\right ],
\left [
\begin{matrix}
5x^2y+y^2+3x+3\\
y^3+2xy+3y+2\\
4x^3+x^2+3y+5\\
\end{matrix}
\right ]
\right )
\]

\[b(x,y)=\left (
\left [
\begin{matrix}
4xy^2+3x^2+y+5\\
5xy^2+5y^2+y+5\\
3x^2y+3x^2+x+2
\end{matrix}
\right ],
\left [
\begin{matrix}
y^3+4xy+y+1\\
y^3+x^2+4y+5\\
5y^3+5xy+5y+2\\
\end{matrix}
\right ]
\right )
\]

Using \textsc{Singular} \cite{singular} and the above facts one may check that: 
$a|S^{1}$, $a|S^{1}(2)$, $a|S^{1}(10)$, $b|S^{1}$, $b|S^{1}(\sqrt{7/2})$, $b|S^{1}(10)$ go into $\widetilde{V}_2(\R ^3)$, and $\widetilde{a}$, $\widetilde{b}$ have a finite number of zeros. 

Moreover, for $a$ the dimension of the algebra $\Apis$ equals $23$, for $b$ it equals $21$.

For the mapping $a$ we get
\[
\Lambda(a |S^{1}(1))=\Lambda(a |S^{1}(2))=1 \mod 2,
\] 
but 
\[\Lambda(a |S^{1}(10))=0 \mod 2,
\]
and for $b$
\[
\Lambda(b |S^{1}(1))=\Lambda(b |S^{1}(10))=0 \mod 2,\]
but
\[\Lambda \left (b |S^{1}\left ( \sqrt{7/2}\right ) \right )=1 \mod 2.
\]
\end{ex}

\section{The number of cross--caps}

Let $M$ be a smooth $m$--dimensional manifold.
According to  \cite{golub,whitney2,whitney1}, a point $p\in M$ is a cross--cap of a smooth mapping $f:M\longrightarrow \R^{2m-1}$, if there is a coordinate system near $p$, such that 
in some neighbourhood of $p$ the mapping $f$ has the form 
\[(x_1,\ldots , x_m)\mapsto(x_1^2,x_2,\ldots , x_m,x_1x_2,\ldots ,x_1x_m).\]

Take $f:M\longrightarrow \R^{2m-1}$ with only cross--caps as singularities. For $m$ even, 
by \cite[Theorem 3]{whitney1}, if $M$ is a closed manifold, then $f$ has an even number of cross--caps.   

Let $(M,\partial M)$ be $m$--dimensional smooth compact manifold with boundary. Take a continuous mapping $f\colon [0;1]\times M\longrightarrow \R^{2m-1}$ such that there exists a neighbourhood of $\partial M$ in which all the $f_t$'s are regular, and $f_0$, $f_1$ have only cross--caps as singularities. According to  \cite[Theorem 4]{whitney1}, if $m$ is even, then $f_0$ and $f_1$ have the same number of cross--caps $\mod 2$.

Let $f\colon \R^m\longrightarrow \R^{2m-1}$ be smooth.
Using  an equivalent definition \cite[Definition VII.4.5]{golub} of a cross--cap and the fact that the space of $1$--jets of mappings from $\R^m$ to $\R^{2m-1}$ coincides with $\R^m\times \R^{2m-1}\times M_m(\R^{2m-1})$, it is easy to see that:

\begin{rem} The point $p\in \R^m$ is a cross--cap of $f$ if and only if $\rank df(p)=m-1$ and $df \pitchfork \Sigma _m^1(\R^{2m-1})$ at $p$ (where we consider $df$ as a map going to $M_m(\R^{2m-1})$). 
\end{rem}

Let us assume that for some $r>0$ there is no singular points of $f$ belonging to $S^{m-1}(r)$. Then $(df)|S^{m-1}(r)\colon S^{m-1}(r)\longrightarrow \widetilde{V}_m(\R^{2m-1})$. 
By Theorem \ref{lambdatransversal} we obtain the following facts.

\begin{rem}
The mapping $f|\overline{B}^{m}(r)$ has only cross--caps as singular points if and only if the origin is a regular value of $\widetilde{df}\colon S^{m-1}\times \overline{B}^{m}(r)\longrightarrow \R ^{2m-1}$. 
\end{rem}

If $m$ is even, then the difference $2m-1-m=m-1$ is odd and $\Lambda ((df)|S^{m-1}(r))$ is defined. 

\begin{cor} \label{crosscaps}
If $m$ is even and $f$ has only cross--caps as singular points, then 
\[
\Lambda ((df)|S^{m-1}(r))=\mbox{ number of cross--caps of } f \mbox{ in } \overline{B}^{m}(r) \mod 2 .
\]
\end{cor}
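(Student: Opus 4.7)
The plan is to reduce the statement directly to Theorem \ref{lambdatransversal} by setting $a := df|\overline{B}^m(r)$, viewed as a smooth mapping into $M_m(\R^{2m-1})$. With $n=2m-1$ and $k=m$, the dimensional hypotheses of the theorem are satisfied: $n-k=m-1$ is odd because $m$ is even, and $k=m\geqslant 2>1$. The hypothesis that $f$ has no singularities on $S^{m-1}(r)$ means that $df(p)$ has rank $m$ for every $p\in S^{m-1}(r)$, so $a|S^{m-1}(r)\colon S^{m-1}(r)\longrightarrow \widetilde{V}_m(\R^{2m-1})$, which puts $a$ exactly in the framework where $\Lambda(a|S^{m-1}(r))$ is defined.

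Next, I would invoke the second Remark preceding the corollary: under the standing assumptions, $f|\overline{B}^m(r)$ has only cross--caps as singularities if and only if $0$ is a regular value of $\widetilde{df}\colon S^{m-1}\times \overline{B}^{m}(r)\longrightarrow \R^{2m-1}$. By hypothesis $f$ has only cross--caps, so $0$ is indeed a regular value of $\widetilde{a}$. Then Theorem \ref{lambdatransversal} applies and yields both $a\pitchfork \Sigma_m$ and the identity
\[
\Lambda\bigl((df)|S^{m-1}(r)\bigr)=\#\,a^{-1}(\Sigma_m)\ \mathrm{mod}\ 2.
\]

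The last step is to identify the right-hand side with the number of cross--caps of $f$ in $\overline{B}^m(r)$. The first Remark preceding the corollary characterises cross--caps as those points $p$ at which $\rank df(p)=m-1$ and $df\pitchfork \Sigma_m^1$ at $p$; equivalently, they are precisely the points of $(df)^{-1}(\Sigma_m^1)$ once we know $df\pitchfork \Sigma_m$. Since by assumption the only singularities of $f$ in $\overline{B}^m(r)$ are cross--caps, and since the absence of singularities on $S^{m-1}(r)$ guarantees $(df)^{-1}(\Sigma_m)\subset B^m(r)$, we conclude
\[
\#\,(df)^{-1}(\Sigma_m)=\#\{\text{cross--caps of }f\text{ in }\overline{B}^m(r)\},
\]
which combined with the previous displayed equation gives the corollary.

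There is essentially no obstacle: all the real work has been done in Theorem \ref{lambdatransversal} and the two preceding remarks, and the proof is just a matter of matching notation ($n=2m-1$, $k=m$) and checking that the parity condition $n-k$ odd translates into $m$ even. The one point that merits a line of justification is that $(df)^{-1}(\Sigma_m)$ lies in the open ball, so that these zeros really do count towards $\Lambda$ via the theorem, but this is immediate from the regularity of $f$ on $S^{m-1}(r)$.
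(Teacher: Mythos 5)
Your proposal is correct and follows exactly the route the paper intends: the paper gives no separate proof of Corollary \ref{crosscaps}, deriving it (together with the two preceding Remarks) directly from Theorem \ref{lambdatransversal} with $n=2m-1$, $k=m$, $a=df$, just as you do. Your explicit check that $(df)^{-1}(\Sigma_m)$ lies in the open ball and coincides with the set of cross--caps is the right bookkeeping and matches the paper's implicit argument.
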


\begin{cor} \label{close}
If $m$ is even then the number of cross--caps in $\overline{B}^{m}(r)$ of every smooth mapping $g\colon \R^m\longrightarrow \R^{2m-1}$ close enough to $f$ with only cross--caps as singular points is congruent to  $\Lambda ((df)|S^{m-1}(r)) \mod 2$.
\end{cor}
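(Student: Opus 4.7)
The plan is to use closeness in the $C^1$-topology to reduce the claim to Corollary \ref{crosscaps} (applied to $g$ instead of $f$) together with the homotopy invariance of $\Lambda$ established in Theorem \ref{lambdahomotopy}.

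First I would exploit the hypothesis that $df$ has no rank-deficient point on $S^{m-1}(r)$. Since $(df)|S^{m-1}(r)$ is a continuous mapping from the compact set $S^{m-1}(r)$ into the open set $\widetilde{V}_m(\R^{2m-1})\subset M_m(\R^{2m-1})$, its image is bounded away from $\Sigma_m$. Hence there is a neighbourhood of $f$ in the $C^1$-topology (which is what ``close enough'' refers to) such that for every $g$ in this neighbourhood, $(dg)|S^{m-1}(r)$ still takes values in $\widetilde{V}_m(\R^{2m-1})$, and moreover the whole straight-line homotopy
\[
h_t(x)=(1-t)\,df(x)+t\,dg(x),\qquad x\in S^{m-1}(r),\ t\in[0;1],
\]
remains inside $\widetilde{V}_m(\R^{2m-1})$.

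Next, $h_t$ gives a homotopy in $\widetilde{V}_m(\R^{2m-1})$ between $(df)|S^{m-1}(r)$ and $(dg)|S^{m-1}(r)$. Since $m$ is even, $n-k=(2m-1)-m=m-1$ is odd and $k=m>1$, so Theorem \ref{lambdahomotopy} (or its continuous version Theorem \ref{lambdahomotopycg}) applies and yields
\[
\Lambda\bigl((df)|S^{m-1}(r)\bigr)=\Lambda\bigl((dg)|S^{m-1}(r)\bigr).
\]

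Finally, $g$ itself satisfies the hypotheses of Corollary \ref{crosscaps}: it is smooth, $m$ is even, it has no singular points on $S^{m-1}(r)$ (by the choice of neighbourhood), and by assumption its only singularities in $\overline{B}^m(r)$ are cross-caps. Applying Corollary \ref{crosscaps} to $g$ gives
\[
\Lambda\bigl((dg)|S^{m-1}(r)\bigr)=\#\{\text{cross-caps of }g\text{ in }\overline{B}^m(r)\}\mod 2,
\]
and combining the two displayed equalities finishes the argument. The only real point that needs care is verifying that $C^1$-closeness really suffices for both the homotopy through $\widetilde{V}_m(\R^{2m-1})$ and the absence of singularities of $g$ on $S^{m-1}(r)$; this is immediate from the compactness of $S^{m-1}(r)$ and the openness of $\widetilde{V}_m(\R^{2m-1})$ in $M_m(\R^{2m-1})$, so no genuine obstacle arises.
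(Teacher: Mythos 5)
Your argument is correct and is essentially the derivation the paper intends (it leaves Corollary \ref{close} as an immediate consequence): closeness of $g$ to $f$ forces $(dg)|S^{m-1}(r)$ into $\widetilde{V}_m(\R^{2m-1})$ and makes it homotopic there to $(df)|S^{m-1}(r)$, so $\Lambda$ agrees by Theorem \ref{lambdahomotopycg} (the paper would equally well quote Lemma \ref{bliskie} directly), and then Corollary \ref{crosscaps} applied to $g$ finishes the proof. Your care in noting that $f$ itself need not have only cross--caps --- only the absence of singular points of $f$ on $S^{m-1}(r)$ is used --- matches the paper's third example.
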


Using computer system \textsc{Singular} \cite{singular} we apply the facts from Sections \ref{stiefel} and \ref{computing} to present some examples illustrating the above Corollaries.

\begin{ex}
Let $f\colon \R^2 \longrightarrow \R^3$ be given by
\[
f(x,y)=(15xy^3+19y^3+9x^2+6y,25y^3+15x^2,7y^3+21xy).
\]
One can check that $f$ has only cross--caps as singular points, in fact there are $3$ of them, and
the number of cross--caps of $f$ in $\overline{B}^2(1)$ and in $\overline{B}^2(10)$ is congruent to $1$, but in $\overline{B}^2(5)$ is congruent to $0$ modulo $2$ (Corollary \ref{crosscaps}).
\end{ex}

\begin{ex}
Let $f\colon \R^2 \longrightarrow \R^3$ be given by
\[
f(x,y)=(2xy^3+7x^2y,6xy^5+29x^4y+20y^4+26y^3+27x^2+9x,
\]
\[
21x^2y^4+7x^2y^3+11xy^3+20xy^2+10xy+8y).
\]
One can check that $f$ has only cross--caps as singular points, in fact there are $14$ of them, and
the number of cross--caps of $f$ in $\overline{B}^2(1)$ is congruent to $1$, but in $\overline{B}^2(1/10)$ and in $\overline{B}^2(10)$ is congruent to $0$ modulo $2$ (Corollary \ref{crosscaps}).
\end{ex}

\begin{ex}
Let $f\colon \R^2 \longrightarrow \R^3$ be given by
\[
f(x,y)=(21x^2y^2+13xy^2+7y^2+27y,16xy^4+7y^4+19x^3,7xy^4+6x^3y+21x^2y).
\]
In this case $f$ has some singular points that are not cross--caps. One can check that $f$ has no singular points on the sphere $S^1(1)$ and every smooth mapping close enough to $f$ with only cross--caps as singular points has an odd number of cross--caps in $\overline{B}^2(1)$ (Corollary \ref{close}).
\end{ex}

\end{document}